\keywords{first-order logic, Peano arithmetic, Tennenbaum's theorem, constructive type theory, Church's thesis, synthetic computability, Coq}
\newcommand{\MBB}[1]{\ensuremath{\mathbb{#1}}\xspace}  %
\newcommand{\MCL}[1]{\ensuremath{\mathcal{#1}}\xspace} %
\newcommand{\MSF}[1]{\ensuremath{\mathsf{#1}}\xspace}  %
\newcommand{\Nat}{\MBB{N}}   %
\newcommand{\Bool}{\MBB{B}}  %
\newcommand{\Unit}{\mymathbb{1}}  %
\renewcommand{\iff}{\mathbin{\leftrightarrow}}
\newcommand{\Iff}{\mathbin{\Longleftrightarrow}}
\newcommand{\col}{\!:\!}
\newcommand{\incl}{\ensuremath{\subseteq}}
\newcommand{\cdef}{\mathbin{:=}}
\newcommand{\bnfdef}{\mathbin{::=}}
\DeclareMathAlphabet{\mymathbb}{U}{bbold}{m}{n}
\newcommand{\fun}{\lambda}
\newcommand{\Prop}{\MBB{P}}
\newcommand{\btrue}{\MSF{tt}}
\newcommand{\bfalse}{\MSF{ff}}
\newcommand{\Void}{\mymathbb{0}}
\newcommand{\SigType}[2]{\Sigma{#1}.\,{#2}}
\newcommand{\cnil}{\mathalpha{[\,]}}
\newcommand{\ccons}{\mathbin{::}}
\newcommand{\Opt}[1]{\MCL{O}{(#1)}}
\newcommand{\List}[1]{\MSF{List}{(#1)}}
\newcommand{\Vector}[2]{{#2}^{#1}}
\newcommand{\norm}[1]{\vert{#1} \vert}
\newcommand{\some}[1]{{}^\circ{#1}}
\newcommand{\none}{\emptyset}
\newcommand{\definite}[1]{\MSF{definite} \, {#1}}
\newcommand{\stable}[1]{\MSF{stable} \, {#1}}
\newcommand{\stablePred}[1]{\MSF{stable} \, {#1}}
\newcommand{\M}{\MCL{M}}
\newcommand{\Q}{\MSF{Q}}
\newcommand{\PA}{\MSF{PA}}
\newcommand{\HA}{\MSF{HA}}
\newcommand{\CTT}{\MSF{CTT}}
\newcommand{\IZF}{\MSF{IZF}}
\newcommand{\ZF}{\MSF{ZF}}
\newcommand{\WS}{\MSF{WS}}
\newcommand{\CIC}{\MSF{CIC}}
\newcommand{\CT}{\MSF{CT}}
\newcommand{\WCT}{\MSF{WCT}}
\newcommand{\CTQ}{\MSF{CT_Q}}
\newcommand{\WCTQ}{\MSF{WCT_Q}}
\newcommand{\MP}{\MSF{MP}}
\newcommand{\LEM}{\MSF{LEM}}
\newcommand{\DNE}{\MSF{DNE}}
\newcommand{\UC}{\MSF{AUC}}
\newcommand{\dec}[1]{\MSF{dec} {#1}}
\newcommand{\decider}[2]{\MSF{decider}\,{#1}\,{#2}}
\newcommand{\Dec}[1]{\MSF{Dec} {#1}}
\newcommand{\primeFunc}[1]{\pi_{#1}}
\newcommand{\FuncSignature}{\MCL{F}}
\newcommand{\PredSignature}{\MCL{P}}
\newcommand{\Term}{\MSF{tm}}
\renewcommand{\Form}{\MSF{fm}}
\newcommand{\cvdash}{\vdash_c}
\newcommand{\enumform}[1]{\Phi_{#1}}
\newcommand{\szero}{0}
\newcommand{\natS}{S}
\newcommand{\num}[1]{\overline{#1}}
\newcommand{\sequal}{=}
\newcommand{\sless}{<}
\newcommand{\sle}{\leq}
\newcommand{\sneg}{\neg}
\newcommand{\simp}{\to}
\newcommand{\siff}{\leftrightarrow}
\newcommand{\sand}{\land}
\newcommand{\sor}{\lor}
\newcommand{\sforall}{\forall \,}
\newcommand{\sexists}{\exists \,}
\newcommand{\interpFunc}[2]{{#1}^{#2}}
\newcommand{\interpPred}[2]{{#1}^{#2}}
\newcommand{\evalTerm}[2]{ \hat{#1} \, {#2} }
\newcommand{\evalForm}[3]{{#1} \vDash_{#2} {#3} }
\newcommand{\envcons}{;}
\newcommand{\satformula}[2]{ {#1} \vDash {#2} (\num{\, \cdot \,}) }
\newcommand{\Inum}[1]{\nu \, {#1}}
\newcommand{\InumAbrv}[1]{\num{#1}}
\newcommand{\Izero}[1]{\szero^{#1}}
\newcommand{\Isucc}[2]{\natS^{#1}{#2}}
\newcommand{\standardElement}[1]{\MSF{std}{#1}}
\newcommand{\divNat}[1]{ \, \num{\, \cdot \,} \mid {#1} }
\renewcommand{\iff}{\leftrightarrow}
\def\cf{{\em cf.}}
\newcommand{\Bibkeyhack}[3]{}
\begin{document}

\title[An Analysis of Tennenbaum's Theorem in Constructive Type Theory]{An Analysis of Tennenbaum's Theorem\texorpdfstring{\\}{} in Constructive Type Theory\rsuper*}
\titlecomment{{\lsuper*}This is an extended version of a paper published at FSCD 2022~\cite{hermes2022tennenbaum}.}

\author[M.~Hermes]{Marc Hermes\lmcsorcid{0000-0002-0375-759X}}[a,b]
\author[D.~Kirst]{Dominik Kirst\lmcsorcid{0000-0003-4126-6975}}[b]

\address{Radboud University Nijmegen, Netherlands}	%
\email{Marc.Hermes@cs.ru.nl}  %

\address{Universität des Saarlandes, Saarbrücken, Germany}	%
\email{kirst@ps.uni-saarland.de}  %

\begin{abstract}
\noindent Tennenbaum's theorem states that the only countable model of Peano arithmetic (PA) with computable arithmetical operations is the standard model of natural numbers.
In this paper, we use constructive type theory as a framework to revisit, analyze and generalize this result.

The chosen framework allows for a synthetic approach to computability theory, exploiting that, externally, all functions definable in constructive type theory can be shown computable.
We then build on this viewpoint, and furthermore internalize it by assuming a version of Church's thesis, which expresses that any function on natural numbers is representable by a formula in PA\@.
This assumption provides for a conveniently abstract setup to carry out rigorous computability arguments, even in the theorem's mechanization.

Concretely, we constructivize several classical proofs and present one inherently constructive rendering of Tennenbaum's theorem, all following arguments from the literature.
Concerning the classical proofs in particular, the constructive setting allows us to highlight differences in their assumptions and conclusions which are not visible classically.
All versions are accompanied by a unified mechanization in the Coq proof assistant.
\end{abstract}

\maketitle

\section{Introduction}\label{Introduction}

There are well-known proofs in classical logic showing that the first-order theory of Peano arithmetic (\PA) has non-standard models, meaning models which are not isomorphic to the standard model \Nat{}.
One way to do so (\cf~\cite{boolos2002computability}), starts by adding a new constant symbol \( c \) to the language of \PA{}, together with the enumerable list of new axioms \( c \neq 0 \),  \( c \neq 1 \), \( c \neq 2 \) etc.
This theory has the property that every finite subset of its axioms is satisfied by the standard model \Nat, since we can always give a large enough interpretation of the constant \( c \) in \Nat.
Hence, by the compactness theorem, the full theory has a model \M, which
must then be non-standard since the interpretation of \( c \) in \M corresponds to an element which is larger than any number \( n \) in \( \Nat{} \).

The presence of non-standard elements like this has interesting consequences.
\PA{} can prove that for every bound \( n \), sums of the form \( \sum_{k \leq n} a_k \) exist, so in particular for example the Gaussian sum \( \sum_{k \leq n} k \). 
The presence of the non-standard element \( c \) in \( \M \) allows for the creation of infinite sums like \( \sum_{k \leq c} k \). 
Remarkably, while this means that it includes a summation over all natural numbers, the model specifies a single element as the result of this infinite sum.
A general \PA{} model \( \M \) therefore exhibits behaviors which disagree with the common intuition that computations in \PA{} are finitary, which are---in the end---largely based on the familiarity with the standard model \Nat.

\enlargethispage{\baselineskip}

These intuitions are still not too far off the mark, as was demonstrated by Stanley Tennenbaum~\cite{tennenbaum1959non} in a remarkable theorem.
By being a little more restrictive on the models under consideration, \Nat{} regains a unique position:
\begin{quote}
  \textbf{Tennenbaum's Theorem:}
  Apart from the standard model \Nat, there is no countable \emph{computable} model of first-order \PA{}\@.
\end{quote}
A model is considered \emph{computable} if its elements can be coded by numbers in \Nat{}, and the arithmetic operations on its elements can be realized by computable functions on these codes.
Usually, Tennenbaum's theorem is formulated in a classical framework such as \ZF{} set theory, and the precise meaning of \emph{computable} is given by making reference to a concrete model of computation like Turing machines, \( \mu \)-recursive functions, or the \( \fun \)-calculus~\cite{kaye2011tennenbaum, smith2014tennenbaum}.
In a case like this, where computability theory is applied rather than developed, the computability of a function is rarely proven by exhibiting an explicit construction in the specific model, but rather by invoking the informal \emph{Church-Turing thesis}, stating that every function intuitively computable is computable in the chosen model.
Proving the computability of a function is then reduced to giving an intuitive justification. 

The focus of this paper lies on revisiting Tennenbaum's theorem and several of its proofs in a constructive type theory (\CTT).
In contrast to classical treatments, the usage of a constructive meta-theory enables us to formally assume \emph{Church's thesis}~\cite{kreisel1970church,troelstra1988constructivism,forster:LIPIcs:2021:13455} in the form of an axiom, stating that every total function is computable.
By its usage, the elegant and succinct paper-style computability proofs can be reproduced, but in a fully formal manner, also allowing for straightforward mechanized proofs.

In the constructive type theory that we will specify in Section~\ref{section_preliminaries}, the addition of this axiom becomes possible since we can adapt the approach of \emph{synthetic computability}~\cite{richman1983church,bauer2006first,forster2019synthetic}:
Any function term that is definable in \CTT{} by the virtue of its syntactic rules, can externally be observed to be a computable function.
Following through on this external observation, it can be taken as a justification to also internally treat functions as if they were computable. 
For example, we will make use of this perspective when defining a predicate on a type \( X \) to be \emph{decidable} if there exists a function \( f \col X \to \Bool \) computing booleans which reflect the truth values of \(p\) (\autoref{def_decidability}).

This approach leads to a simplification when it comes to the statement of Tennenbaum's theorem itself:
By interpreting arithmetic operations as type-theoretic functions in \CTT{}, all models are automatically computable in the synthetic sense.
Consequently, ``computable model'' no longer needs to be part of the theorem statement. 
Moreover, the given synthetic proofs unveil the computational essence of Tennenbaum's theorem with neither the technical overhead of constructions in a formal model of computability nor the informal practice to simply disregard computability arguments.
The paper therefore contributes to an active line of research on mechanized metamathematics~\cite{KirstThesis} with the two-fold goal to obtain a uniform and constructive development of the foundations of mathematics, complemented by a principled mechanization for guaranteed correctness and collaborative use.

In the above sketched framework, we follow the classical presentations of Tennenbaum's theorem~\cite{kaye2011tennenbaum, smith2014tennenbaum} and develop constructive versions that only assume a type-theoretic version of \emph{Markov's principle}~\cite{mannaa2017independence}.
This is then complemented by the adaption of an inherently more constructive variant given by McCarty~\cite{mccarty1987variations,McCarty}.

Concretely, our contributions can be summarized as follows:
\begin{itemize}
	\item	We review several existing proofs of Tennenbaum's theorem drawn from the literature, and carry them out in a constructive meta-theory. 
  We work out subtle differences in the strengths of their conclusions, which are left invisible in any classical treatment, but come to light once they are viewed through a constructive lens.
	\item	By considering models with a decidable divisibility relation (\autoref{tennenbaum-inseparable}), we extend the theorem to models which do not have to be discrete or enumerable.
	\item	We provide a Coq mechanization covering all results presented in this paper.\footnote{
    The full mechanization is available on
    \textcolor{ACMDarkBlue}{\href{https://github.com/uds-psl/coq-library-fol/tree/tennenbaum}{GitHub}}~\cite{hermes2023repo}
    and can conveniently be viewed on a \textcolor{ACMDarkBlue}{\href{https://uds-psl.github.io/coq-library-fol/tennenbaum/toc.html}{webpage}}~\cite{kirst2023website}.
    We make use of a fact for which we gave no mechanized proof, namely Hypothesis~\ref{hypothesis_object_level_coding} in Section~\ref{section_Variants}.
  }
\end{itemize}
The present paper is an extended version of~\cite{hermes2022tennenbaum} and adds the following contributions:
\begin{itemize}
  \item In~\cite{hermes2022tennenbaum}, we only gave a reference to a possible proof strategy for showing the existence of \HA{}-inseparable formulas (\autoref{def_HA_inseparable}). 
  A comment in~\cite{peters2022goedel} pointed to a more straightforward approach, which we were able to mechanize, and have added to Section~\ref{section_Variants}.
  \item We added a short discussion in Section~\ref{section_root_of_Tennenbaum}, in which we aim to abstractly identify the main ingredients that are used in Section~\ref{section_Variants} to derive Tennenbaum's theorem.
  \item Our Coq mechanization has been re-based and now relies on a collaborative Coq library for first-order logic~\cite{kirst2022library}, to which we have contributed by integrating the mechanization developed for this publication. This integration not only enabled the utilization of pre-existing definitions for \( \Delta_1 \) and \(\Sigma_1 \)--formulas but also provides additional validation for our application of Church's thesis for Robinson arithmetic (\CTQ{}), given 
  the library includes its derivation from a more conventional formulation of Church's thesis~\cite{kirst2023goedel}.
  \item The presentation of several proofs, definitions and theorem statements in Section~\ref{section_coding} and Section~\ref{section_Tennenbaum} has been revised. Additionally, a mistake in~\cite{hermes2022tennenbaum} has been corrected; the original version of \HA-coding (\cf~\cite{hermes2022tennenbaum} Hypothesis 2) was not constructively provable, the new one (\autoref{hypothesis_object_level_coding}) is.
\end{itemize}
To conclude this introduction, we give a brief overview on the structure of the paper, in the order that we consider most suitable for a first reading:

The main results of the analysis are summarized in Section~\ref{section_discussion}, where we give a tabulated overview on the different variants of Tennenbaum's theorem that result from the various proofs.
It clarifies which assumptions are made for each version, and we give a brief discussion of what to take from these differences.
The complete proofs are covered in Section~\ref{section_Tennenbaum}, ending with Section~\ref{section_root_of_Tennenbaum} in which we abstractly capture the essence of Tennenbaum's theorem.

In Section~\ref{section_CT} we motivate and introduce our chosen formulation of Church's thesis which is utilized as an axiom. 
Basic results about \PA{}'s standard and non-standard models are shown in Section~\ref{section_HA_models} and then used in Section~\ref{section_coding} to establish results that allow the encoding of predicates on \Nat{}, which are essential in the proof of Tennenbaum's theorem.

To make the paper self-contained, we also give an introduction to the essential features of constructive type theory, synthetic computability, and the type-theoretic specification of first-order logic in Section~\ref{section_preliminaries}. This is continued in Section~\ref{secion_PA_axiomatization} by the presentation of the first-order axiomatization of \PA{} as given in previous work~\cite{kirst2021synthetic,kirst2023synthetic}.

\section{Preliminaries}\label{section_preliminaries}

\subsection{Constructive Type Theory}\label{section_CTT}

The chosen framework for this paper is a constructive type theory (\CTT).
More specifically, it will be the calculus of inductive constructions (\CIC)~\cite{coquand1986calculus, paulin1993inductive} which is implemented in the Coq proof assistant~\cite{the_coq_development_team_2023_8161141}. 
It provides a predicative hierarchy of \emph{type universes} above a single impredicative universe \( \Prop \) of \emph{propositions} and the capability of inductive type definitions.
On the type level, we have the unit type \( \Unit \) with a single element,
the void type \( \Void \), function spaces \( X\to Y \), products \( X \times Y \), sums \( X + Y \), dependent products\footnote{
  As is custom in Coq, we write \( \forall \) in place of the symbol \( \Pi \) for dependent products.
}
\( \forall (x:X).\,A\,x \), and dependent sums \( \Sigma (x:X).\,A\,x \).
On the propositional level, analogous notions to the one listed for types in the above are present, but denoted by their usual logical notation (\( \top \), \( \bot \), \( \to \), \( \land \), \( \lor \), \( \forall \), \( \exists \)).\footnote{
  Negation \( \neg A \) is used as an abbreviation for both \( A \to \bot \) and \( A \to \Void \).
}
It is important to note that the so-called \emph{large eliminations} from the impredicative \( \Prop \) into higher types of the hierarchy are restricted.
In particular, it is generally not possible to show \( (\exists x. \, p \, x) \to \Sigma x. \, p \, x \).\footnote{
  The direction \( (\Sigma x. \, p \, x) \to \exists x. \, p \, x \) is however always provable.
  Intuitively, one can think of \( \exists x. \, p \, x \) as stating the mere existence of a value satisfying \( p \), while \( \Sigma x. \, p \, x \) is a type that also carries a value satisfying \(p\).
}
The restriction does however allow for large elimination of the equality predicate \( = \) of type \( \forall X.\,X\to X\to\Prop \), as well as function definitions by well-founded recursion.

We will also use the basic inductive types
of \emph{Booleans} (\( \Bool:= \btrue\mid\bfalse \)),
\emph{Peano natural numbers} (\( n:\Nat:= 0\mid n+1 \)),
the \emph{option type} (\( \Opt X:= \some{x}\mid\none \)) and
\emph{lists} (\( l:\List X:=\cnil\mid x\ccons l \)).
Furthermore, by \( X^n \) we denote the type of \emph{vectors} \( \vec v \) of length \( n:\Nat{} \) over \( X \).

Given predicates \( P, Q \col X \to \Prop \) on a type \( X \), we will occasionally use the set notation \( P \incl Q \) for expressing \( \forall x \col X. \, P\, x \to Q \, x \).
\setCoqFilename{FOL.Tennenbaum.SyntheticInType}
\begin{definition}[ ][definite]\label{classical_principles}
  A proposition \( P \col \Prop \) is called \coqlink[definite]{\emph{definite}} if \( P \lor \neg P \) holds and \coqlink[stable]{\emph{stable}} if \( \neg \neg P \to P \).
  The same terminology is used for predicates \( p \col X \to \Prop \) given they are \coqlink[Definite]{pointwise definite} or \coqlink[Stable]{stable}.
  We furthermore want to recall the following logical principles:
  \begin{alignat*}{2}
    \coqlink[LEM]{\LEM} &\cdef \forall P \col \Prop. \, \definite{P}
    \hspace{1.5em} &\text{(Law of Excluded Middle)}
    \\
    \coqlink[DNE]{\DNE} &\cdef \forall P \col \Prop. \, \stable{P}
    \hspace{1.5em} &\text{(Double-Negation Elimination)}
    \\
    \coqlink[MP]{\MP} &\cdef \forall f \col \Nat{} \to \Nat. \, \stable{(\exists n. \, f n = 0)}
    \hspace{1.5em} &\text{(Markov's Principle)}
  \end{alignat*}
\end{definition}
\noindent
Note that \coqlink[LEM_DNE]{\LEM{} and \DNE{} are equivalent} and not provable in \CIC{}, while \MP{} is much weaker and has a constructive interpretation~\cite{mannaa2017independence}.
For convenience, and as used by Bauer~\cite{bauer2008potentially}, we adapt the reading of double negated statements like \( \neg \neg P \) as ``\emph{potentially \( P \)}''.\footnote{
  A referee pointed out to us that \( \neg \neg P \) is closely related to \( \Box \Diamond P \) in modal logic, where \( \Box \) stands for \emph{necessity} and \( \Diamond \) for \emph{possibility}. The statement can therefore also be read as ``by necessity, \(P\) must be possible''.
}
\setCoqFilename{FOL.Tennenbaum.DN_Utils}
\begin{coqrem}[\coqlink{Handling \( \neg \neg \)}]\label{DN_handling}
  Given any propositions \( A, B \) we constructively have the equivalence {\( (A \to \neg B) \iff (\neg \neg A \to \neg B) \)}, meaning that when trying to prove a negated goal, we can remove double negations in front of any assumption.
  More generally, any statement of the form
  \(
    \neg \neg A_1 \to \hdots \to \neg \neg A_n \to \neg \neg C
  \)
  is equivalent to
  \(
    A_1 \to \hdots \to A_n \to \neg \neg C
  \)
  and since {\( C \to \neg \neg C \)} holds, it furthermore suffices to show \( A_1 \to \hdots \to A_n \to C \) in this case.
  In the following, we will make use of these facts without further notice.
\end{coqrem}

\subsection{Synthetic Computability}\label{section_synthetic}

As already expressed in Section~\ref{Introduction}, constructive type theory permits us to take a viewpoint that considers all functions to be computable functions, yielding simple definitions~\cite{forster2019synthetic} of many textbook notions of computability theory:
\setCoqFilename{FOL.Tennenbaum.SyntheticInType}
\begin{definition}[Enumerability][enumerable]
  Let \( p : X \to \Prop \) be some predicate.
  We say that \( p \) is \coqlink[enumerable]{\emph{enumerable}} if there is an \emph{enumerator} \( f : \Nat{} \to \Opt X \) such that \( \forall x \col X. \, p \, x \iff \exists n. \, f n = \some x \).
\end{definition}

\begin{definition}[Decidability][Dec]\label{def_decidability}
  Let \( p : X \to \Prop \) be some predicate.
  We call \( f : X \to \Bool \) a \coqlink[decider]{\emph{decider}} for \( p \) and write \( \decider{p}{f} \) iff \( \forall x \col X. \, p \, x \iff f x = \btrue \).
  We then define the following notions of decidability:
  \begin{itemize}
    \item \coqlink[Dec_decider]{{\( \Dec{\, p} \)} \( \cdef ~ \exists f \col X \to \Bool. \, \decider{p}{f} \)}
    \item \coqlink[dec]{{\( \dec{( P \col \Prop)} \)} \( \cdef ~ P + \neg P \)}.
  \end{itemize}
  In both cases we will often refer to the predicate or proposition simply as being {\emph{decidable}}.
\end{definition}
We also expand the synthetic vocabulary with notions for types.
In the textbook setting, many of them can only be defined for sets which are in bijection with \Nat, but synthetically they can be handled in a very uniform way.
\begin{definition}[ ][Enumerable]\label{type_properties}
  We call a type \( X \)
  \begin{itemize}
    \item \coqlink[enumerable_equiv]{\emph{enumerable}} if \( \fun x \col X. \top \) is enumerable,
    \item \coqlink[Discrete_equiv]{\emph{discrete}} if there exists a decider for equality \( = \) on \( X \),
    \item \coqlink[Separated_equiv]{\emph{separated}} if there exists a decider for apartness \( \neq \) on \( X \),
    \item \coqlink[Witnessing_equiv]{\emph{witnessing}} if \( \forall f \col X \to \Bool. \, (\exists x. \, f x = \btrue) \to \SigType{x}{f x = \btrue} \).
  \end{itemize}
\end{definition}
\setCoqFilename{FOL.Tennenbaum.MoreDecidabilityFacts}
\begin{coqfact}[ ][Witnessing_nat]\label{Nat_witnessing}
  In \CIC, the types \coqlink[Witnessing_nat]{\Nat{}} and \coqlink[Witnessing_natnat]{\( \Nat^2 \)} are witnessing. \qed{}
\end{coqfact}

\subsection{First-Order Logic}\label{section_FOL}

In order to study Tennenbaum's theorem, we need to give a description of the first-order theory of \PA{} and the associated intuitionistic theory of \emph{Heyting arithmetic} (\HA), which has the same axiomatization, but uses intuitionistic first-order logic.
We follow prior work in~\cite{forster2019synthetic,forster2021completeness,kirst2021synthetic} and describe first-order logic as embedded inside the constructive type theory, by inductively defining formulas, terms, and the deduction system.
We then define a semantics for this logic, which uses Tarski models and interprets formulas over the respective domain of the model.
The type of natural numbers \Nat{} will then naturally be a model of \HA{}.

Before specializing to one particular theory, we keep the definition of first-order logic general and fix some arbitrary signature \( \Sigma = (\FuncSignature; \PredSignature) \) for function and predicate symbols.
\setCoqFilename{undec/Undecidability.FOL.Syntax.Core}
\begin{definition}[Terms and Formulas][term]
  We define \coqlink[term]{terms} {\( t \col \Term \)} and \coqlink[form]{formulas} {\( \varphi \col \Form \)} inductively:
  \begin{align*}
    s, t  \col \Term &\bnfdef
      x_n \mid f \, \vec v
      \hspace{0.6cm} (n \col \Nat, ~f \col \MCL F, ~\vec v \col \Vector{\norm{f}}{\Term})
    \\
    \alpha, \beta \col \Form &\bnfdef
      \bot \mid
      P \, \vec v \mid
      \alpha \simp \beta \mid
      \alpha \sand \beta \mid
      \alpha \sor \beta \mid
      \sforall \alpha \mid
      \sexists \beta
      \hspace{0.6cm} (P \col \MCL P, ~\vec v \col \Vector{\norm{P}}{\Term}),
  \end{align*}
  where \( \vert f \vert \) and \( \vert P \vert \) are the arities of the function symbol \( f \) and predicate symbol \( P \), respectively.
\end{definition}
We use de Bruijn indexing to formalize the binding of variables to quantifiers. This means that the variable \( x_n \) at some position in a formula is \emph{bound} to the \( n \)-th quantifier preceding this variable in the syntax tree of the formula.
If there is no quantifier binding the variable, it is said to be \emph{free}.
\begin{definition}[Substitution][subst_term]
  Given a variable assignment \( \sigma : \Nat \to \Term \) we recursively define \coqlink[subst_term]{\emph{substitution} on terms} by \( x_k [\sigma] \cdef \sigma \, k \) and \( f \, \vec v \cdef f (\vec v [\sigma]) \), and \coqlink[subst_form]{extended to formulas} by
  \begin{equation*}
    \bot [\sigma] \cdef \bot
    \hspace{1.4em}
    (P \, \vec v)[\sigma] \cdef P \, (\vec v [\sigma])
    \hspace{1.4em}
    (\alpha ~\dot \Box~ \beta) [\sigma] \cdef \alpha[\sigma] ~\dot \Box~ \beta[\sigma]
    \hspace{1.3em}
    (\dot \nabla \, \varphi)[\sigma] \cdef \dot \nabla (\varphi [x_0 \envcons \fun x. (\sigma x)[\uparrow]])
  \end{equation*}
  where \( \dot \Box \) is any logical connective and \( \dot \nabla \) any quantifier.
  The variable assignment \coqlink[scons]{\( x \envcons \sigma \)} is defined by
  \( (x \envcons \sigma) \, 0 \cdef x \) as well as
  \( (x \envcons \sigma) (n+1) \cdef \sigma \, n \)
  and simply appends \( x \) as the first element to \( \sigma \col \Nat{} \to \Term \).
  By \( \uparrow \) we designate the assignment \( \lambda n. \, x_{n+1} \) shifting all variable indices by one.
\end{definition}

\setCoqFilename{undec/Undecidability.FOL.Deduction.FullND}
\begin{definition}[Natural Deduction][ ]\label{def_natural_deduction}
Natural deduction \coqlink[prv]{\( \vdash ~: \List{\Form} \rightarrow \Form \rightarrow \Prop \)} is characterized inductively by the usual rules (see \autoref{appendix_natural_deduction}).
We write \( \vdash \) for intuitionistic natural deduction and \( \cvdash \) for the classical variant, which extends \( \vdash \) by adding every instance of Peirce's law \( ((\varphi \to \psi) \to \varphi) \to \varphi \).\footnote{
  Another way to treat the distinction between classical and intuitionistic theories would be to add all instances of Peirce's law to the axioms of a theory, instead of building them into the deduction system.
}
\end{definition}

\setCoqFilename{undec/Undecidability.FOL.Semantics.Tarski.FullCore}
\begin{definition}[Tarski Semantics][ ]\label{def_Tarski_semantics}
  A \emph{model} \( \M \) consists of a type \( D \) designating its domain together with functions \( \interpFunc{f}{\M} : D^{\norm{f}} \to D \) and \( \interpPred{P}{\M} : D^{\norm{P}} \to \Prop \) for all symbols \( f \) in \( \FuncSignature \) and \( P \) in \( \PredSignature \).
  We will also use \( \M \) to refer to the domain.
  Functions \( \rho : \Nat{} \to \M \) are called environments and are used as variable assignments to recursively give \coqlink[eval]{evaluations to terms}:
  \begin{align*}
    \evalTerm{\rho}{x_k} \cdef \rho \, k
    \hspace{3em}
    \evalTerm{\rho}{(f \, \vec v)} \cdef \interpFunc{f}{\M} (\evalTerm{\rho}{\vec v})
    \hspace{2.5em}
    (v \col \Vector{n}{\Term})
  \end{align*}
  This interpretation is then extended to formulas via \coqlink[sat]{the satisfaction relation}:
  \begin{alignat*}{2}
    \evalForm{\M}{\rho}{P \, \vec v} ~&\cdef~ \interpPred{P}{\M} (\evalTerm{\rho}{\vec v})
    &
    \evalForm{\M}{\rho}{\alpha \simp \beta} ~&\cdef~ \evalForm{\M}{\rho}{\alpha} \to \evalForm{\M}{\rho}{\beta}
    \\
    \evalForm{\M}{\rho}{\alpha \sand \beta} ~&\cdef~ \evalForm{\M}{\rho}{\alpha} \land \evalForm{\M}{\rho}{\beta}
    &\hspace{3em}
    \evalForm{\M}{\rho}{\alpha \sor \beta} ~&\cdef~ \evalForm{\M}{\rho}{\alpha} \lor \evalForm{\M}{\rho}{\beta}
    \\
    \evalForm{\M}{\rho}{\sforall \alpha} ~&\cdef~ \forall x \col D.~ \evalForm{\M}{x \envcons \rho}{\alpha}
    &
    \evalForm{\M}{\rho}{\sexists \alpha} ~&\cdef~ \exists x \col D.~ \evalForm{\M}{x \envcons \rho}{\alpha}
  \end{alignat*}
  We say that a formula \( \varphi \) \emph{holds in the model} \( \M \) and write \( \M \vDash \varphi \) if for every \( \rho \) we have \( \evalForm{\M}{\rho}{\varphi} \). We extend this notation to theories \( \MCL T \col \Form \to \Prop \) by writing \( \M \vDash \MCL T \) iff \( \forall \varphi. \, \MCL T \, \varphi \rightarrow \M \vDash \varphi \), and we write \( \MCL T \vDash \varphi \) if \( \M\vDash \varphi \) for all models \( \M \) with \( \M \vDash \MCL T \).
  Given a \( e \col \M \), we also use the notation \( \M \vDash \varphi(e) \) for all \( \rho \) we have \( \M \vDash_{e ;\rho} \varphi \).
\end{definition}

\setCoqFilename{undec/Undecidability.FOL.Semantics.Tarski.FullSoundness}
\begin{coqfact}[Soundness][tsoundness]
  For any formula \( \varphi \) and theory \( \MCL T \), if \( \MCL T \vdash \varphi \) then \( \MCL T \vDash \varphi \). \qed{}
\end{coqfact}
From the next section on, we will use conventional notation with named variables instead of explicitly writing formulas with de Bruijn indices.

\section{Axiomatization of Peano Arithmetic and Heyting Arithmetic}\label{secion_PA_axiomatization}

\setCoqFilename{undec/Undecidability.FOL.Arithmetics.FA}

We present \PA{} following~\cite{kirst2021synthetic}, as a first-order theory with a signature consisting of symbols for the constant zero, the successor function, addition, multiplication and equality:
\[
  {\Sigma_\PA{} \cdef (\FuncSignature_\PA{} ; \PredSignature_\PA) = \textstyle (\szero , \, \natS , +  ,  \times  ;  \sequal )}
\]
The finite core of \PA{} axioms consists of statements characterizing the successor function, as well as addition and multiplication:
\begin{align*}
  \setCoqFilename{undec/Undecidability.FOL.Arithmetics.PA}
  \coqlink[PAeq_discr]{\text{Disjointness: }} & {\sforall} x. \, S x \sequal \szero \simp \bot  &
  \setCoqFilename{undec/Undecidability.FOL.Arithmetics.PA}
  \coqlink[PAeq_inj]{\text{Injectivity: }} & {\sforall}  x y . \,  S x \sequal S y \simp x \sequal y \\
  \setCoqFilename{undec/Undecidability.FOL.Arithmetics.FA}
  \coqlink[ax_add_zero]{+ \text{-base: }} & {\sforall}  x. \, \szero  +  x \sequal x &
  \coqlink[ax_add_rec]{+ \text{-recursion: }} & {\sforall}  x y . \, (\natS x)  +  y \sequal S (x  +  y)  \\
  \coqlink[ax_mult_zero]{\times\text{-base: }} & {\sforall}  x. \, \szero \times x \sequal \szero  &
  \coqlink[ax_mult_rec]{\times\text{-recursion: }} & {\sforall} x y . \, (\natS x) \times y \sequal y  +  x \times y
\end{align*}
We then get the full (and infinite) axiomatization of {\PA} with the axiom scheme of induction for unary formulas.
In our meta-theory the schema is a type-theoretic function on formulas:
\setCoqFilename{undec/Undecidability.FOL.Arithmetics.PA}
\[
  \coqlink[ax_induction]{\lambda \varphi.\, \varphi[\szero] \simp ( \sforall x.\, \varphi[x] \simp \varphi[S x] ) \simp \sforall x.\, \varphi[x]}
\]
If instead of the induction scheme we add the axiom {\( \sforall x. \, x = \szero \sor \sexists y. \, x = S y \)}, we get the theory {\Q} known as \emph{Robinson arithmetic}.
Both \PA{} and \Q also contain axioms for equality:
\setCoqFilename{undec/Undecidability.FOL.Arithmetics.FA}
\begin{align*}
  \coqlink[ax_refl]{\text{Reflexivity: }} & {\sforall}  x . \,  x \sequal x \\
  \coqlink[ax_sym]{\text{Symmetry: }} & {\sforall}  x y. \,  x \sequal y \simp y \sequal x \\
  \coqlink[ax_trans]{\text{Transitivity: }} & {\sforall}  x y z. \,
  x \sequal y \simp y \sequal z \simp x \sequal z \\
  \coqlink[ax_succ_congr]{\natS \text{-equality: }} & {\sforall}  x y. \,
  x \sequal y \simp \natS x \sequal \natS y \\
  \coqlink[ax_add_congr]{ + \text{-equality: }} & {\sforall}  x y u v. \,
  x \sequal u \simp y \sequal v \simp  x  +  y \sequal u  +  v  \\
  \coqlink[ax_mult_congr]{\times\text{-equality: }} & {\sforall}  x y u v. \,
  x \sequal u \simp y \sequal v \simp  x \times y \sequal u \times v
\end{align*}
Using classical derivability \( \vdash_c \, \) we get the classical first-order theory of Peano arithmetic \( \PA \vdash_c \). 
Its intuitionistic counterpart \( \PA \vdash \) uses intuitionistic derivability \( \vdash \) and is called \emph{Heyting arithmetic}.
Given that the constructive type theory selected for this work only provides a model for the intuitionistic theory, we will restrict our focus on Heyting arithmetic. 
To emphasize this, we will from now on write \HA{} instead of \PA{}.

For simplicity, we only consider models that interpret the equality symbol with the actual equality relation of its domain, so-called \emph{extensional} models.
In the Coq development we make the equality symbol a syntactic primitive, therefore enabling the convenient behavior that the interpreted equality reduces to actual equality.
\setCoqFilename{undec/Undecidability.FOL.Arithmetics.FA}
\begin{definition}[ ][num]
  We recursively define a function \coqlink[num]{\( \num{\, \cdot \,} : \Nat{} \rightarrow \Term \)} by \( \num{0} \cdef \szero \) and \( \num{n+1} \cdef \natS{ \num{n} } \), giving every natural number a representation as a term. Any term \( t \) which is of the form \( \num{n} \) will be called \emph{numeral}.
\end{definition}
We furthermore use notations for expressing \emph{less than} \( x \sless y \cdef \sexists k. \, \natS (x  +  k) \sequal y \), \emph{less than or equal} \( x \sle y \cdef \sexists k. \, x  +  k \sequal y \) and for \emph{divisibility} \( x \mid y \cdef \sexists k. \, x \times k \sequal y \).

The formulas of \HA{} can be classified in a hierarchy based on their computational properties.
We will only consider two levels of this hierarchy:
\setCoqFilename{FOL.Incompleteness.qdec}
\begin{definition}[\(\Delta_1\) and \(\Sigma_1\)--formulas (\cf~\cite{kirst2023goedel})][Qdec]\label{def_DeltaForm}
  A formula \( \varphi \) is 
  \coqlink[Qdec]{\( \Delta_1 \)} if for every assignment \( \sigma \) that only substitutes closed terms, we have \( \Q \vdash \varphi[\sigma] \) or \( \Q \vdash \neg \varphi[\sigma] \).
  \setCoqFilename{FOL.Incompleteness.sigma1}
  A formula is \coqlink[53c9961cd487ec928874d3989eac19a2]{\( \Sigma_1 \)}
  if it is of the form \( \exists x_1 \dots \exists x_n. \, \varphi_0 \), where \( \varphi_0 \)  is a \( \Delta_1 \)--formula.
\end{definition}

Given a \( \Sigma_1 \)--formula \( \exists x_1 \hdots \exists x_n. \varphi \) where \( \varphi \) is \( \Delta_1 \), we can prove it equivalent to the formula \( \exists x \, \exists x_1 < x \hdots \exists x_n < x. \, \varphi \).
Since \(\Delta_1\)--formulas stay \(\Delta_1\) if they are closed by a bounded quantifier, this shows that the initial \(\Sigma_1 \)--formula can be written as a \( \Delta_1 \)--formula, which is preceded by exactly one existential quantifier. We will occasionally make use of this fact and refer to it as 
\setCoqFilename{FOL.Incompleteness.sigma1}
\coqlink[8eb9c83d193eda350750450f09b3dfb3]{\emph{\( \Sigma_1 \)--compression}}.
A more syntactic definition of \( \Delta_1 \) would characterize them as the formulas which are equivalent to both a \( \Pi_1 \) and \( \Sigma_1 \)--formula.
For our purposes the definition which only stipulates the necessary decidability properties is sufficient, as it implies the absoluteness and completeness properties we will need~\cite{kirst2023goedel}:
\setCoqFilename{FOL.Incompleteness.sigma1}
\begin{coqfact}[\( \Sigma_1 \)--Completeness][4a8f1009ad08f25a76601bb3b9104242]\label{Sigma1_completeness}
  For any \( \Sigma_1 \)--formula \( \varphi \) we have \( \Nat \vDash \varphi \) iff \( \HA \vdash \varphi \). \qed{}
\end{coqfact}
\setCoqFilename{FOL.Tennenbaum.Tennenbaum_insep}
\begin{coqfact}[\( \Delta_1 \)--Absoluteness][Delta1_absoluteness]\label{Delta1_absoluteness}
  Let \( \M \vDash \HA \) and \( \varphi \) be any closed \( \Delta_1 \)--formula, then we have \( \Nat \vDash \varphi \to \M \vDash \varphi \). \qed{}
\end{coqfact}

\section{Standard and Non-standard Models of HA}\label{section_HA_models}

From now on \( \M \) will always designate a \HA{} model. Any model like this has an interpretation \(\Izero{\M}\) of the zero symbol, as well as an interpretation \( \Isucc{\M}{} \col \M \to \M \) of the symbol for the successor. By repeated application of \( \Isucc{\M}{} \) we can therefore get the sequence of elements \( \Izero{\M}, \Isucc{\M}{}\Izero{\M} , \Isucc{\M}{}\Isucc{\M}{}\Izero{\M}, \hdots  \), essentially giving us a copy of the standard numbers inside \M.
We will now put this intuition more formally.

\setCoqFilename{FOL.Tennenbaum.Peano}
\begin{coqfact}[ ][inu]\label{def_Inum}\label{eval_num}
  We recursively define a function \coqlink[inu]{\( \Inum{} : \Nat{} \to \M \)} by \( \Inum{0} \cdef \Izero{\M} \) and \( \Inum{(n+1)} \cdef \Isucc{\M}{(\Inum{\, n})} \).
  Furthermore, we define the predicate \coqlink[std]{\( {\standardElement{}} \cdef \fun e. \, \exists n. \,\Inum{n} = e \)} and refer to \( e \) as a \emph{standard number} if \( \standardElement{ \, e} \) and \emph{non-standard} if \( \neg \, \standardElement{\, e} \).
  We then have
  \begin{enumerate}
    \coqitem[eval_num]{\( \evalTerm{\rho}{\num{n}} = \Inum{n} \) for any \( n \col \Nat{} \) and environment \( \rho \col \Nat \to \M \).}
    \coqitem[inu_inj]{\( \Inum{} \) is an injective homomorphism and therefore an \emph{embedding} of \Nat{} into \M.}
  \end{enumerate}
  Both facts are taken as justification to abuse notation and also write \( \num{n} \) for \( \Inum{n} \). \qed{}
\end{coqfact}
Usually we would have to write \( \Izero{\M}, \Isucc{\M}{},  + ^\M, \times^\M, \sequal^\M \) for the interpretations of the respective symbols in a model \( \M \).
For better readability we will however take the freedom to overload the symbols \( 0, S, +, \cdot, = \) to also refer to these interpretations.
\begin{definition}[ ][stdModel']
  \M is called a \emph{standard model} if there is a bijective homomorphism \( \varphi : \Nat \to \M \).
  We will accordingly write \( \M \cong \Nat \) if this is the case.
\end{definition}
We can show that \( \Inum{} \) is essentially the only homomorphism from \( \Nat \) to \M we need to worry about, since it is unique up to extensional equality of functions:
\begin{lemma}[ ][hom_agree_inu]\label{unique_nat_hom}
  Let \( \varphi : \Nat \to \M \) be a homomorphism, then \( \forall x \col \Nat. \, \varphi \, x = \Inum{x} \).
\end{lemma}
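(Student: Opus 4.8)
The plan is to proceed by induction on \( x \col \Nat{} \), using the recursive definition of \( \Inum{} \) together with only the zero- and successor-preservation clauses of the homomorphism \( \varphi \). Since the claim is a pointwise (extensional) equality \( \forall x. \, \varphi \, x = \Inum{x} \), no function extensionality is needed: it suffices to establish the equality for each fixed argument \( x \).

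First I would spell out what being a homomorphism \( \varphi : \Nat{} \to \M \) provides at the zero and successor symbols, namely \( \varphi \, 0 = \Izero{\M} \) and \( \varphi \, (S \, n) = \Isucc{\M}{(\varphi \, n)} \) for all \( n \col \Nat{} \). (A homomorphism additionally respects \( +, \times \) and \( = \), but these clauses are not required here.) These are exactly the two equations that mirror the defining equations \( \Inum{0} \cdef \Izero{\M} \) and \( \Inum{(n+1)} \cdef \Isucc{\M}{(\Inum{\, n})} \) from \autoref{def_Inum}.

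For the base case \( x = 0 \), we compute \( \varphi \, 0 = \Izero{\M} = \Inum{0} \), where the first equality is the zero-clause of the homomorphism and the second is the defining equation of \( \Inum{} \) at \( 0 \). For the inductive step \( x = n+1 \), assume the induction hypothesis \( \varphi \, n = \Inum{\, n} \). Then
\[
  \varphi \, (n+1) = \varphi \, (S \, n) = \Isucc{\M}{(\varphi \, n)} = \Isucc{\M}{(\Inum{\, n})} = \Inum{(n+1)},
\]
where the first step rewrites \( n+1 \) as the successor \( S \, n \) in \( \Nat{} \), the second is the successor-clause of the homomorphism, the third applies the induction hypothesis, and the last is the defining equation of \( \Inum{} \) at \( n+1 \).

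There is no genuine obstacle in this argument; the only point requiring care is identifying precisely which homomorphism clauses are invoked — here just preservation of \( 0 \) and \( S \) — and keeping the rewriting between \( n+1 \) and \( S \, n \) explicit. The lemma is thus a direct structural induction, and it justifies the subsequent informal identification of any homomorphism out of \Nat{} with the canonical embedding \( \Inum{} \).
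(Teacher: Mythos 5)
Your proof is correct and takes essentially the same approach as the paper, which also argues by induction on \( x \) using the homomorphism clauses for \( 0 \) and \( S \); you merely spell out the two defining equations explicitly. No gaps.
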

\begin{proof}
  By induction on \( x \) and using the fact that both are homomorphisms.
\end{proof}
We now have two equivalent ways to express standardness of a model.
\begin{lemma}[ ][stdModel_eqiv]\label{std_model_equiv_definitions}
  \( \M \cong \Nat \) iff \(\forall e \col \M. \, \standardElement{\, e}\).
\end{lemma}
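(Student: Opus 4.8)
The plan is to prove both implications using \( \Inum{} \) as the canonical homomorphism from \( \Nat{} \) to \( \M \), and to reconcile an arbitrary bijective homomorphism with it by means of \autoref{unique_nat_hom}.

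For the forward direction, I would assume \( \M \cong \Nat{} \), witnessed by a bijective homomorphism \( \varphi : \Nat \to \M \). Given an arbitrary \( e \col \M \), surjectivity of \( \varphi \) yields some \( n \) with \( \varphi \, n = e \). By \autoref{unique_nat_hom} we have \( \varphi \, n = \Inum n \), and therefore \( \Inum n = e \), which is exactly \( \standardElement{\, e} \). Since \( e \) was arbitrary, every element is standard.

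For the backward direction, I would assume \( \forall e \col \M. \, \standardElement{\, e} \) and take \( \Inum{} \) itself as the witnessing homomorphism. By \autoref{def_Inum} (second item), \( \Inum{} \) is an injective homomorphism, so only surjectivity remains to be checked. But surjectivity, namely \( \forall e. \, \exists n. \, \Inum n = e \), is precisely the assumption \( \forall e. \, \standardElement{\, e} \) after unfolding the definition of \( \standardElement{} \). Hence \( \Inum{} \) is a bijective homomorphism and \( \M \cong \Nat{} \).

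I expect the argument to be essentially routine, with the only delicate point being the constructive reading of \emph{bijective}. The proof above goes through provided surjectivity is understood as the mere existential \( \forall e. \, \exists n. \, \varphi \, n = e \), so that no explicit inverse function \( \M \to \Nat{} \) has to be produced; extracting such a section would otherwise demand unique choice or definite description, which is unavailable in \CIC{} owing to the restriction on large elimination from \( \Prop \) recalled in Section~\ref{section_CTT}. Since \( \Inum{} \) is already known to be injective, standardness of every element suffices to package it as the required bijective homomorphism without constructing an inverse.
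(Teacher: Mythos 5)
Your proof is correct and follows essentially the same route as the paper: the forward direction uses surjectivity of the given isomorphism together with \autoref{unique_nat_hom} to transfer surjectivity to \( \Inum{} \), and the backward direction packages \( \Inum{} \) (injective by \autoref{def_Inum}) with the assumed surjectivity. Your closing remark on reading surjectivity as a mere existential, so that no inverse needs to be extracted, is a fair observation consistent with the paper's definition of a standard model.
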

\begin{proof}
  Given \( \M \cong \Nat \), there is an isomorphism \( \varphi : \Nat \to \M \). Since \( \varphi \) is surjective, \autoref{unique_nat_hom} implies that \( \Inum{} \) must also be surjective.
  For the converse: if \(\Inum{}\) is surjective, it is an isomorphism since it is injective by \autoref{def_Inum}.
\end{proof}
Having seen that every model contains a unique embedding of \( \Nat \), one may wonder whether there is a formula \( \varphi \) which could define and pick out precisely the standard numbers in \( \M \).
\autoref{definability_std} gives a negative answer to this question:

\setCoqFilename{FOL.Tennenbaum.Coding}
\begin{lemma}[ ][predicate_equiv]\label{definability_std}
  There is a unary formula \(\varphi(x)\) with
  \(
    \forall e \col \M. \, \big( \, \standardElement{\, e} \, \leftrightarrow \, \M \vDash \varphi(e) \, \big)
  \) if and only if \( \M \cong \Nat{} \).
\end{lemma}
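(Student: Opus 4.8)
The plan is to prove the two directions separately, in each case reducing to \autoref{std_model_equiv_definitions}, which characterizes \( \M \cong \Nat \) as the statement that every element of \( \M \) is standard. The right-to-left direction is immediate: if \( \M \cong \Nat \), then every \( e \col \M \) satisfies \( \standardElement{\, e} \), so I can take \( \varphi(x) \) to be any universally valid formula, say \( \varphi(x) \cdef x \sequal x \). Then \( \M \vDash \varphi(e) \) holds for every \( e \) and so does \( \standardElement{\, e} \), making the biconditional trivially true.

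The interesting direction is left-to-right. Assume a unary \( \varphi(x) \) with \( \standardElement{\, e} \iff \M \vDash \varphi(e) \) for all \( e \). By \autoref{std_model_equiv_definitions} it suffices to establish \( \forall e \col \M. \, \standardElement{\, e} \), and the idea is to obtain this from the induction axiom applied to \( \varphi \). The key observation is that the standard numbers are closed under the two generators of the numeral embedding: \( \Izero{\M} = \Inum{0} \) is standard, and whenever \( e = \Inum{n} \) is standard so is \( \Isucc{\M}{e} = \Inum{(n+1)} \). Translating these closure properties through the defining biconditional of \( \varphi \), I get \( \M \vDash \varphi(\Izero{\M}) \) and \( \forall e. \, \M \vDash \varphi(e) \to \M \vDash \varphi(\Isucc{\M}{e}) \).

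Now I invoke that \( \M \vDash \HA \): since the induction scheme is a function on formulas, \( \M \) in particular satisfies its instance \( \varphi[\szero] \simp (\sforall x. \, \varphi[x] \simp \varphi[S x]) \simp \sforall x. \, \varphi[x] \). The two facts just established are exactly the semantic readings of its two premises, so the axiom yields \( \M \vDash \sforall x. \, \varphi(x) \), i.e.\ \( \M \vDash \varphi(e) \) for every \( e \). Applying the biconditional once more gives \( \standardElement{\, e} \) for every \( e \), and \autoref{std_model_equiv_definitions} concludes \( \M \cong \Nat \).

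The main care needed is the bookkeeping between the element-indexed notation \( \M \vDash \varphi(e) \) and the substitution-based form of the induction axiom: I would use the substitution lemma for Tarski semantics together with \autoref{def_Inum} (so that \( \num{0} \) evaluates to \( \Izero{\M} \) and \( \varphi[\szero] \) reads as \( \M \vDash \varphi(\Izero{\M}) \)) to unfold \( \varphi[\szero] \) and \( \sforall x. \, \varphi[x] \simp \varphi[S x] \) into the element-wise statements above. I expect this routine unfolding to be the only technical friction; notably, no instance of excluded middle or Markov's principle is required, so the argument stays fully constructive.
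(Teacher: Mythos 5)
Your proposal is correct and follows essentially the same route as the paper: both directions are handled identically, with \( x \sequal x \) witnessing the easy direction and the induction axiom instance for \( \varphi \) (whose premises follow from standardness of \( \num{0} \) and closure of \( \standardElement{} \) under successor, transported through the biconditional) yielding \( \forall e \col \M.\,\standardElement{\,e} \) and hence \( \M \cong \Nat \) via \autoref{std_model_equiv_definitions}. The remark about substitution bookkeeping is a reasonable implementation detail the paper leaves implicit.
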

\begin{proof}
  Given a formula \( \varphi \) with the stated property, we certainly have \( \M \vDash \varphi(\num{0}) \) since \( \InumAbrv{0} \) is a standard number, and clearly \( \M \vDash \varphi(x) \implies \standardElement{\, x} \implies \standardElement{\, (S x)} \implies \M \vDash \varphi(S x) \).
  Thus, by induction in the model, we have \( \M \vDash  \forall x. \, \varphi(x) \), which is equivalent to \( \forall e \col \M. \, \standardElement{\, e} \).
  The converse implication holds by choosing the formula \( x = x \).
\end{proof}
We now turn our attention to models which are not isomorphic to \Nat.

\setCoqFilename{FOL.Tennenbaum.Coding}
\begin{coqfact}[ ][num_lt_nonStd]\label{Std_lt_nonStd}
  For any \( e\col\M \), we have \( \neg \, \standardElement{\, e} \) iff \( \forall n \col \Nat. ~ e > \num{n} \). \qed{}
\end{coqfact}
\setCoqFilename{FOL.Tennenbaum.Peano}
\begin{definition}[ ][nonStd]\label{def_nonStdModel}
  Founded on the result of \autoref{Std_lt_nonStd} we write \( e > \Nat \) iff \( \neg \, \standardElement{\, e} \) and call \( \M \)
  \begin{itemize}
    \item \coqlink[nonStd]{\emph{non-standard}} (written \( \M > \Nat{} \)) iff there is \( e \col \M \) such that \( e > \Nat \),
    \item \coqlink[notStd]{\emph{not standard}} (written \( \M \not \cong \Nat{} \)) iff \( \neg \M \cong \Nat{} \).
  \end{itemize}
  We will also write \( e \col \M > \Nat \) to express that  \( e \) is a non-standard element in \( \M \).
\end{definition}
Of course, we have \coqlink[nonStd_notStd]{\( \M > \Nat \to \M \not \cong \Nat \)}, but the converse implication does not hold constructively in general, so the distinction of both notions becomes meaningful.
\setCoqFilename{FOL.Tennenbaum.Coding}
\begin{lemma}[Overspill][Model.Overspill]\label{Overspill}
  If \( \M \not \cong \Nat \) and \( \varphi(x) \) is unary with \( \M \vDash \varphi(\InumAbrv{n}) \) for every \( n \col \Nat \), then
  \begin{enumerate}
    \coqitem[Overspill]{\( \neg \, \big ( \forall e \col \M. \, \M \vDash \varphi(e) \to \standardElement{\, e} \big ) \)}
    \coqitem[Overspill_DN]{\( \stablePred{\, \standardElement{}} \to \neg \neg \, \exists \, e > \Nat. \, \M \vDash \varphi(e) \)}
    \coqitem[DNE_Overspill]{\DNE{} \(  \to \exists \, e > \Nat. \, \M \vDash \varphi(e). \)}
  \end{enumerate}
\end{lemma}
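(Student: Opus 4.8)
The plan is to prove the three parts in sequence, each building on the previous, with the first part resting entirely on \autoref{definability_std}.

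For part (1) I would argue by contradiction: assume $\forall e \col \M. \, \M \vDash \varphi(e) \to \standardElement{\, e}$. The key observation is that this assumption, combined with the hypothesis that $\varphi$ holds at every numeral, makes $\varphi$ define \emph{exactly} the standard elements. Indeed, the hypothesis $\M \vDash \varphi(\num{n})$ for all $n$ supplies one direction $\standardElement{\, e} \to \M \vDash \varphi(e)$: if $\standardElement{\, e}$, then $e = \Inum{n}$ for some $n$, and $\M \vDash \varphi(\num{n})$ transfers to $\M \vDash \varphi(e)$ using $\evalTerm{\rho}{\num{n}} = \Inum{n}$ (\autoref{def_Inum}); the assumed implication gives the converse. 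Hence $\forall e. \, (\standardElement{\, e} \leftrightarrow \M \vDash \varphi(e))$, so \autoref{definability_std} yields $\M \cong \Nat$, contradicting $\M \not\cong \Nat$.

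For part (2) I would assume $\standardElement{}$ is stable and reduce to part (1). To establish $\neg\neg \exists e > \Nat. \, \M \vDash \varphi(e)$, I would assume the negation $\neg \exists e > \Nat. \, \M \vDash \varphi(e)$ and then re-derive the statement refuted in part (1), namely $\forall e. \, \M \vDash \varphi(e) \to \standardElement{\, e}$, applying part (1) to close the goal. So fix $e$ with $\M \vDash \varphi(e)$; by stability it suffices to prove $\neg\neg\standardElement{\, e}$, so assume $\neg\standardElement{\, e}$, i.e. $e > \Nat$ (\autoref{def_nonStdModel}). Then $e$ witnesses $\exists e > \Nat. \, \M \vDash \varphi(e)$, contradicting the standing assumption. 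Routing the negated existential back into the universal premise of part (1) via stability is exactly where the hypothesis $\stablePred{\standardElement{}}$ is consumed.

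For part (3), under $\DNE$ every proposition—in particular $\standardElement{\, e}$ for each $e$—is stable, so part (2) applies and gives $\neg\neg \exists e > \Nat. \, \M \vDash \varphi(e)$; a final application of $\DNE$ to this proposition removes the double negation and produces the desired $\exists e > \Nat. \, \M \vDash \varphi(e)$. I expect the only genuine subtlety, rather than a real obstacle, to lie in the constructive bookkeeping of part (2): the classical overspill argument simply \emph{finds} the nonstandard witness, whereas here one can obtain it only potentially, and the proof must carefully feed the negated existential into the universal premise of part (1) using stability of standardness. Everything else is either a direct appeal to \autoref{definability_std} or, for part (3), the observation that $\DNE$ both makes all predicates stable and strips the outer double negation.
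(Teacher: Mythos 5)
Your proposal is correct and follows essentially the same route as the paper: part (1) combines the numeral hypothesis with the assumed implication to obtain the full equivalence required by \autoref{definability_std}, part (2) feeds the negated existential through stability of \( \standardElement{} \) back into the premise refuted in (1), and part (3) is immediate from (2) under \DNE{}. The only difference is that you spell out the two directions of the equivalence and the double-negation bookkeeping more explicitly than the paper does.
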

\begin{proof}
  \( (1) \) Assuming \( \forall e \col \M. ~  \M \vDash \varphi(e) \to \standardElement{\, e} \) and combining it with our assumption that \( \varphi \) holds on all numerals, \autoref{definability_std} implies \( \M \cong \Nat{} \), giving us a contradiction.
  For \( (2) \) note that we constructively have that \( \neg \exists e \col \M. \, \neg \standardElement{\, e} \,\land \, \M \vDash \varphi(e) \) implies \( \forall e \col \M. \, \M \vDash \varphi(e) \to \neg \neg \, \standardElement{\, e} \),
  and by using the stability of \( \standardElement{} \) we therefore get a contradiction in the same way as in \( (1) \).
  Statement \( (3) \) immediately follows from \( (2) \).
\end{proof}
From \autoref{Overspill} we learn that under certain conditions, whenever a formula is satisfied on all standard numbers \( \num n \), this satisfaction ``spills over'' into the non-standard part of the model, meaning there is a non-standard element which also satisfies the formula.
In the next section, we will encounter our first application of this principle.

\section{Coding Finite and Infinite Predicates}\label{section_coding}

There is a standard way in which finite sets of natural numbers can be encoded by a single natural number.
Assuming we have some injective function \(\primeFunc{} : \Nat{} \to \Nat{} \) whose image consists only of prime numbers, and given a finite set of numbers like \( S \cdef \{ 4, 13, 21, 33 \} \), we can encode this set by the single number \( c \cdef \pi_4 \cdot \pi_{13} \cdot \pi_{21} \cdot \pi_{33} \).
It then satisfies \( n \in S \iff \pi_n \mid c \), allowing us to reconstruct \( S \) by checking which primes divide \(c\).

Instead of applying this to sets, we can also use it to encode bounded portions of predicates on \Nat. 
\setCoqFilename{FOL.Tennenbaum.Coding}
\begin{lemma}[ ][Coding_nat']\label{bounded_coding_nat}
  {Given \( n \col \Nat \) and any predicate \( p : \Nat \rightarrow \Prop \) with \( \forall x < n. \, p \, x \lor \neg \, p \, x \), we have}
  \[
    \exists c \col \Nat{} ~ \forall u \col \Nat.
    \big(u < n \rightarrow (p \, u \leftrightarrow \primeFunc{u} \mid c) \big) \land \big(\primeFunc{u} \mid c \rightarrow u < n \big)
  \]
  The right part of the conjunction assures that no primes above \( \primeFunc{n} \) end up in the code \(c\).
\end{lemma}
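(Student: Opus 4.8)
The plan is to proceed by induction on \( n \), building the code \( c \) incrementally so that it accumulates exactly one prime factor \( \primeFunc{u} \) for each \( u < n \) at which \( p \) holds. I will rely on three arithmetic facts about the prime enumerator \( \primeFunc{} \) introduced at the start of this section: that it is injective, that each \( \primeFunc{u} \) is prime (in particular \( \primeFunc{u} \geq 2 \), so \( \primeFunc{u} \nmid 1 \)), and Euclid's lemma, which for a prime \( q \) gives \( q \mid a \cdot b \to q \mid a \lor q \mid b \).

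For the base case \( n = 0 \) I take \( c \cdef 1 \). The left conjunct holds vacuously, since no \( u \) satisfies \( u < 0 \), and the right conjunct holds because \( \primeFunc{u} \geq 2 \) never divides \( 1 \), so its premise \( \primeFunc{u} \mid 1 \) is always false. For the inductive step from \( n \) to \( n+1 \), the decidability hypothesis restricts to \( \forall x < n. \, p\,x \lor \neg p\,x \), so the induction hypothesis yields a code \( c \) satisfying both conjuncts for \( n \). Since the goal is a proposition, I may freely case split on the instance \( p\,n \lor \neg p\,n \), which is available because \( n < n+1 \). If \( \neg p\,n \), I keep \( c' \cdef c \); if \( p\,n \), I set \( c' \cdef \primeFunc{n} \cdot c \).

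The crucial preparatory observation is that the right conjunct of the induction hypothesis, instantiated at \( u = n \), gives \( \primeFunc{n} \mid c \to n < n \), hence \( \primeFunc{n} \nmid c \). Combining Euclid's lemma with the primality and injectivity of \( \primeFunc{} \) then establishes, in the case \( c' = \primeFunc{n} \cdot c \), the equivalence \( \primeFunc{u} \mid c' \iff (u = n \lor \primeFunc{u} \mid c) \). With this in hand both conjuncts for \( n+1 \) follow by a case distinction on whether \( u = n \) or \( u < n \): the \( u = n \) case uses \( p\,n \iff \primeFunc{n} \mid c' \) directly (relying on \( \primeFunc{n} \nmid c \) in the \( \neg p\,n \) branch), while the \( u < n \) case reduces to the induction hypothesis, since \( u \neq n \) makes the fresh factor \( \primeFunc{n} \) irrelevant.

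The main obstacle is not the inductive skeleton but the divisibility bookkeeping in the successor case: one must show that multiplying by the fresh prime \( \primeFunc{n} \) adds exactly the single factor \( \primeFunc{n} \) and no others, which rests on Euclid's lemma together with the fact that \( \primeFunc{n} \) was not already a factor of \( c \) — precisely what the right conjunct of the statement is designed to guarantee. Carrying that right conjunct along as part of the inductive invariant is therefore essential; without it the equivalence for \( u = n \) could fail, and the construction would no longer be able to distinguish a freshly added prime from one already present.
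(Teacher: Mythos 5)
Your proposal is correct and follows essentially the same route as the paper's proof: induction on \( n \) with \( c \cdef 1 \) in the base case, a case split on \( p\,n \lor \neg p\,n \) obtained from the definiteness hypothesis at \( n < n+1 \), and the updated code \( c' \cdef c \cdot \primeFunc{n} \) or \( c' \cdef c \). You merely spell out the divisibility bookkeeping (Euclid's lemma, injectivity of \( \primeFunc{} \), and the use of the right conjunct to guarantee \( \primeFunc{n} \nmid c \)) that the paper leaves to the reader.
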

\begin{proof}
  We do a proof by induction on \( n \).
  For \( n = 0 \) we can choose \( c \cdef 1 \).
  In the induction step, the induction hypothesis gives us a code \( c \col \Nat{} \) which codes \( p \) up to \( n \).
  Since by assumption, \( p \) is definite below \( S n \), we know that \( p\,n \lor \neg p\,n \), allowing us to consider two cases:
  If  \( p \, n \), we set the new code to be \( c' \cdef c \cdot \primeFunc{n} \), if \( \neg p \, n \) we simply set \( c' \cdef c \).
  In both cases one can now verify that \( c' \) will correctly code \( p \) up to \(S n\).
\end{proof}

\begin{corollary}[Finite Coding in \Nat][Coding_nat]\label{coding_nat}
  {Given any \( p : \Nat \rightarrow \Prop \) and bound \( n \col \mathbb{N} \), we have}
  \[
    \neg \neg \, \exists c \col \Nat{} ~ \forall u \col \Nat.
    \big(u < n \rightarrow (p \, u \leftrightarrow \primeFunc{u} \mid c) \big) \land \big(\primeFunc{u} \mid c \rightarrow u < n \big)
  \]
  Note that {if \( p \) is definite}, we can drop the \( \neg \neg \).
\end{corollary}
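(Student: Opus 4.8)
The plan is to reduce this corollary to the preceding \autoref{bounded_coding_nat} by supplying its definiteness hypothesis \emph{under a double negation}. Write $\Phi(c)$ for the body $\forall u \col \Nat. \, \big(u < n \rightarrow (p \, u \leftrightarrow \primeFunc{u} \mid c) \big) \land \big(\primeFunc{u} \mid c \rightarrow u < n \big)$, and abbreviate $A \cdef \big(\forall u < n. \, p\,u \lor \neg p\,u\big)$, so that \autoref{bounded_coding_nat} is precisely the implication $A \to \exists c \col \Nat. \, \Phi(c)$.

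The genuine content is the observation that, for an arbitrary $p$ and any bound $n$, the hypothesis $A$ holds \emph{potentially}, i.e.\ $\neg\neg A$, even without any decidability assumption on $p$. This rests on two standard facts about double negation: each single instance is potentially decided, $\neg\neg(p\,u \lor \neg p\,u)$, and $\neg\neg$ commutes with the bounded conjunction ranging over $u < n$ (it distributes over $\land$ and holds vacuously over the empty range). I would make this precise by a short induction on $n$: the base case is trivial, and in the step I combine $\neg\neg A_n$ with $\neg\neg(p\,n \lor \neg p\,n)$, using that $\neg\neg X \to \neg\neg Y \to \neg\neg(X \land Y)$, to obtain $\neg\neg A_{n+1}$.

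With $\neg\neg A$ established, the rest is bookkeeping handled by \autoref{DN_handling}. The goal $\neg\neg \exists c.\,\Phi(c)$ is itself a negated statement, so by that remark proving $\neg\neg A \to \neg\neg \exists c.\,\Phi(c)$ is equivalent to proving $A \to \neg\neg \exists c.\,\Phi(c)$, and since $C \to \neg\neg C$ it suffices to prove the bare implication $A \to \exists c.\,\Phi(c)$. But this is exactly \autoref{bounded_coding_nat}. Feeding in $\neg\neg A$ from the previous paragraph then discharges the goal. For the final remark, if $p$ is pointwise definite then $A$ holds outright, so \autoref{bounded_coding_nat} applies directly and yields the unnegated existence, allowing the $\neg\neg$ to be dropped.

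I expect the only nonroutine step to be the commutation of $\neg\neg$ with the bounded universal quantifier in $A$; once that is in place, the reduction to \autoref{bounded_coding_nat} via \autoref{DN_handling} is mechanical.
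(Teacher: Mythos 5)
Your proposal is correct and matches the paper's own proof: both establish $\neg\neg(\forall u < n.\, p\,u \lor \neg p\,u)$ by induction on $n$ and then feed this into \autoref{bounded_coding_nat}, extracting the existence behind a double negation, with the definite case handled by applying the lemma directly. The extra detail you give on commuting $\neg\neg$ through the bounded quantifier and on the bookkeeping via \autoref{DN_handling} is exactly the content the paper leaves implicit.
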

\begin{proof}
  If \( p \) is definite, we trivially have \( \forall x < n. \, p \, x \lor \neg \, p \, x \), so \autoref{bounded_coding_nat} gives us the \( \neg \neg \)--free existence as claimed.
  Without assuming definiteness, we can still constructively show \( \neg \neg (\forall x < n. \, p \, x \lor \neg \, p \, x) \) by induction on \( n \), which combined with \autoref{bounded_coding_nat} gives us the existence, but behind a double negation.
\end{proof}
With a proof of the encoding in \Nat{} we can give a straightforward proof that this is possible in any model of \HA{}.
\begin{coqrem}[ ]\label{remark_prime_formula}
  To formulate the above result in a generic model \( \M \vDash \HA{} \), we require an object-level representation of the prime function \( \primeFunc{} \).
  For now, we will simply assume that we have such a binary formula \( \Pi(x,y) \) and defer the justification to Section~\ref{section_CT}.
\end{coqrem}
The statement ``\( \primeFunc{u} \) divides \( c \)'' can now be expressed by \( \sexists p . ~\Pi (u, p) \sand p \mid c \), for which we will abuse notation and simply write \( \Pi(u) \mid c \).
\begin{lemma}[Finite Coding in \M][Coding_model_binary]\label{finite_coding_model}
  {For any binary formula \( \alpha(x,y) \) and \( n \col \Nat{} \) we have}
  \begin{align*}
    \M \vDash \sforall e \, \neg \neg \, \sexists c \, \sforall u \sless \num{n}.
    ~\alpha(u,e) \siff \Pi(u) \mid c.
  \end{align*}
\end{lemma}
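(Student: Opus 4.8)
The plan is to transfer the finite coding result already established over $\Nat$ (\autoref{coding_nat}) into the model, exploiting that a bounded quantifier $\sforall u \sless \num n$ effectively ranges only over the finitely many standard elements $\num 0, \dots, \num{n-1}$. First I would fix an element $e \col \M$ and unfold the satisfaction relation, so that the goal becomes $\neg\neg\, \exists c \col \M.\ \forall u \col \M.\ u < \num n \to \big((\M \vDash \alpha(u,e)) \leftrightarrow (\M \vDash \Pi(u) \mid c)\big)$. Since the whole goal sits behind a double negation, \autoref{DN_handling} lets me freely discharge assumptions that are themselves only potentially available.

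Next I would reduce to the statement over $\Nat$. Define the meta-level predicate $p \col \Nat \to \Prop$ by $p\,k \cdef \M \vDash \alpha(\num k, e)$, and apply \autoref{coding_nat} to $p$ with bound $n$. This yields, behind a double negation, a code $c_0 \col \Nat$ satisfying $\forall k.\ \big(k < n \to (p\,k \leftrightarrow \primeFunc{k} \mid c_0)\big) \land \big(\primeFunc{k} \mid c_0 \to k < n\big)$. Using \autoref{DN_handling} once more, I may assume such a concrete $c_0$ while proving the (doubly negated) goal, and I propose $c \cdef \num{c_0}$ as the witnessing code inside $\M$.

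It then remains to verify the coding property for $c = \num{c_0}$. Fix $u \col \M$ with $u < \num n$. A bounded case analysis, provable in $\HA$ by meta-induction on $n$ from the successor and disjointness axioms, shows that $\M \vDash u < \num n$ forces $u = \num k$ for some $k < n$; combined with \autoref{def_Inum} this identifies $u$ with the standard element $\Inum{k}$. For this $k$ I can chain the equivalences $(\M \vDash \alpha(u,e)) \Leftrightarrow p\,k \Leftrightarrow \primeFunc{k} \mid c_0 \Leftrightarrow (\M \vDash \Pi(\num k) \mid \num{c_0})$, where the first step is the definition of $p$, the second is the coding property of $c_0$ (available since $k < n$), and the third is the crucial absoluteness step.

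The main obstacle is precisely this last equivalence, i.e.\ transferring $\primeFunc{k} \mid c_0$ between $\Nat$ and $\M$. Here I would use that $\Pi$ correctly represents $\primeFunc{}$ (the formula assumed in \autoref{remark_prime_formula}), so that in the standard model $\Nat \vDash \Pi(\num k) \mid \num{c_0}$ holds exactly when $\primeFunc{k} \mid c_0$. Since for numeral arguments the relation $\primeFunc{k} \mid c_0$ is decidable and the closed formula $\Pi(\num k) \mid \num{c_0}$ is (equivalent to) a $\Delta_1$--formula, \autoref{Delta1_absoluteness}, applied to this formula and, in the negative case, to its negation, propagates the truth value from $\Nat$ to $\M$ and yields the biconditional; the forward direction alone could equally be obtained from $\Sigma_1$--completeness (\autoref{Sigma1_completeness}) together with soundness. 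The delicate point is ensuring that $\Pi$ is chosen so that $\Pi(\num k) \mid \num{c_0}$ is indeed $\Delta_1$ and hence absolute in both directions, which is exactly the content of the justification deferred to Section~\ref{section_CT}.
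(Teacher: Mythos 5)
Your proposal is correct and follows essentially the same route as the paper: fix \( e \), define \( p\,k \cdef \M \vDash \alpha(\num k, e) \), apply \autoref{coding_nat} and strip the double negation via \autoref{DN_handling}, take \( c \cdef \num{c_0} \), observe that any \( u < \num n \) in \(\M\) is a standard numeral, and chain the equivalences through the representation of \( \primeFunc{} \) by \( \Pi \). Your extra care in the final transfer step (using \(\Delta_1\)--absoluteness / \(\Sigma_1\)--completeness to move \( \primeFunc{k} \mid c_0 \) between \(\Nat\) and \(\M\)) is a detail the paper leaves implicit under ``\( \Pi \) represents \( \primeFunc{} \)'', not a deviation.
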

\begin{proof}
  Let \( e \col \M \), and define the predicate \( p \cdef \lambda u \col \Nat. \, \M \vDash \alpha(\num{u}, e) \).
  Then \autoref{coding_nat} potentially gives us a code \( a \col \Nat{} \) for \( p \) up to the bound \( n \).
  It now suffices to show that the actual existence of \( a \col \Nat{} \) already implies
  \begin{align*}
    \M \vDash \sexists c \, \sforall u \sless \num{n}.
    ~ \alpha(u,e) \siff \Pi(u) \mid c.
  \end{align*}
  And indeed, we can verify that \( c = \InumAbrv{a} \) shows the existential claim:
  given \( u \col \M \) with \( \M \vDash u \sless \num{n} \) we can conclude that \( u \) must be a standard number \( \InumAbrv{u} \).
  We then have the equivalences
  \begin{align*}
    \M \vDash \alpha(\num{u},e)
    ~\Iff~
    p \, u
    ~\Iff~
    \primeFunc{u} \mid a
    ~\Iff~
   \M \vDash \Pi (\num{u}) \mid \num{a}
  \end{align*}
  since \( a \) codes \( p \) and \( \Pi \) represents \( \primeFunc{} \).
\end{proof}
Overspill now has interesting consequences when it comes to encoding, as for models that are not standard, it allows for the potential encoding of a complete predicate \( p : \Nat{} \to \Prop \), and therefore also of infinite subsets.
\begin{lemma}[Infinite Coding in \M][Coding_nonStd_unary]\label{coding_model_unary}
  {If \( \standardElement{} \) is stable, \( \M \not \cong \Nat{} \) and \( \alpha(x) \) unary, we have}
  \[
    \neg \neg \, \exists c \col \M ~ \forall u \col \Nat.
    ~~ \M \vDash
    \alpha(\num{u}) \siff
    \Pi (\num{u}) \mid c.
  \]
\end{lemma}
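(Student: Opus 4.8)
The plan is to code $\alpha$ not merely up to each standard bound, but up to a single \emph{nonstandard} bound $e$, which by \autoref{Std_lt_nonStd} dominates every numeral $\num u$; a code that works below such an $e$ then automatically codes $\alpha$ on all standard inputs. Concretely, I would introduce the unary formula
\[
  \varphi(n) \cdef \neg\neg\, \sexists c \, \sforall u \sless n. ~ \alpha(u) \siff \Pi(u) \mid c,
\]
which asserts the potential existence of a code below the bound $n$. Reading $\alpha$ as a binary formula that ignores its second argument and applying \autoref{finite_coding_model}, I obtain $\M \vDash \varphi(\num n)$ for every standard $n \col \Nat{}$.

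Next I would feed $\varphi$ into overspill. Since $\standardElement{}$ is assumed stable and $\M \not\cong \Nat$, part $(2)$ of \autoref{Overspill} yields $\neg\neg\, \sexists\, e > \Nat. \, \M \vDash \varphi(e)$. The goal of the lemma is itself a $\neg\neg$-statement, so by \autoref{DN_handling} I may discharge this outer double negation and simply assume an actual nonstandard element $e > \Nat$ with $\M \vDash \varphi(e)$.

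It then remains to extract the code. Because Tarski semantics interprets $\simp$ and $\bot$ by meta-level implication and falsity, the object-level double negation inside $\varphi(e)$ unfolds to a meta-level one, giving $\neg\neg\, \exists c \col \M. \, \M \vDash (\sforall u \sless e. \, \alpha(u) \siff \Pi(u) \mid c)$. Peeling this double negation against the (still $\neg\neg$) goal via \autoref{DN_handling} once more, I obtain an actual $c \col \M$ coding $\alpha$ below $e$, and I offer this $c$ as the witness. For any $u \col \Nat$, \autoref{Std_lt_nonStd} gives $\M \vDash \num u \sless e$ since $e$ is nonstandard, and the coding property then delivers $\M \vDash \alpha(\num u) \siff \Pi(\num u) \mid c$, as required.

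The main obstacle is the disciplined handling of the two nested double negations — the object-level one produced by \autoref{finite_coding_model} and the meta-level one produced by overspill — which collapses cleanly only because the target is itself doubly negated (so \autoref{DN_handling} applies) and because stability of $\standardElement{}$ licenses the overspill step; the conceptual crux is then just the observation that a nonstandard bound sits above every numeral.
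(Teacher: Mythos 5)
Your proposal is correct and matches the paper's own proof: both apply \autoref{finite_coding_model} to get the potential finite coding on all numerals, use Overspill (part~(2), via stability of \( \standardElement{} \)) to push the bound to a non-standard \( e \), and then observe that every numeral lies below \( e \), with the nested double negations discharged against the doubly negated goal as in \autoref{DN_handling}. No gaps.
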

\begin{proof}
  Using \autoref{finite_coding_model} for the present case where \( \alpha \) is unary, we get
  \begin{align*}
    \M \vDash \neg \neg \, \sexists c \, \sforall u \sless \num n.
    ~ \alpha(u) \siff
    \Pi (u) \mid c
  \end{align*}
  for every \( n \col \Nat{} \), so by \autoref{Overspill} (Overspill) we get
  \begin{align*}
    &\neg \neg \, \exists \, e > \Nat. ~
    \M \vDash \neg \neg \, \sexists c \, \sforall u \sless e.
    ~ \alpha(u) \siff
    \Pi (u) \mid c
    \\ \implies ~
    &
    \neg \neg \, \exists c \col \M \, \forall u \col \Nat. ~
    \M \vDash
    \alpha(\num{u}) \siff
    \Pi (\num{u}) \mid c,
  \end{align*}
  where we used that since the equivalence holds for all \(u < e\) with \(e\) non-standard, it will in particular hold for all \(u \col \Nat \).
\end{proof}

\begin{lemma}[ ][Coding_nonstd_binary]\label{coding_model_binary}
  {If \( \standardElement{} \) is stable and \( \M \not \cong \Nat{} \), then for binary \( \alpha(x, y) \) and \( e \col \M \) we have}
  \[
    \neg \neg \, \exists c \col \M ~ \forall u \col \Nat.
    ~~ \M \vDash
    \alpha(\num{u}, e) \siff
    \Pi (\num{u}) \mid c.
  \]
\end{lemma}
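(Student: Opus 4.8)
The plan is to follow the proof of \autoref{coding_model_unary} almost verbatim, the only change being that the additional parameter $e \col \M$ is carried along throughout. The key observation is that \autoref{finite_coding_model} is already stated for a binary formula $\alpha(x,y)$ with a leading model-level quantifier $\sforall e$, so no new coding result is needed; we may simply instantiate that quantifier with the given $e$.

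Concretely, I would first invoke \autoref{finite_coding_model} and instantiate its outermost quantifier $\sforall e$ with the given element $e \col \M$. This yields, for every bound $n \col \Nat$,
\[
  \M \vDash \neg \neg \, \sexists c \, \sforall u \sless \num{n}. \; \alpha(u,e) \siff \Pi(u) \mid c.
\]
Reading the left-hand side as a unary formula $\psi(x) \cdef \neg \neg \, \sexists c \, \sforall u \sless x. \; \alpha(u,e) \siff \Pi(u) \mid c$ in the free variable $x$ (with $e$ held fixed in the environment), the displayed statement says precisely that $\M \vDash \psi(\num{n})$ for every $n \col \Nat$. Since $\standardElement{}$ is stable and $\M \not\cong \Nat$, \autoref{Overspill}\,(2) then gives
\[
  \neg \neg \, \exists \, e' > \Nat. \; \M \vDash \neg \neg \, \sexists c \, \sforall u \sless e'. \; \alpha(u,e) \siff \Pi(u) \mid c.
\]
From a non-standard witness $e'$ and the code $c$ it provides, I would conclude as before: by \autoref{Std_lt_nonStd} every numeral satisfies $\M \vDash \num{u} \sless e'$, so the bounded equivalence specializes to $\M \vDash \alpha(\num{u},e) \siff \Pi(\num{u}) \mid c$ for all $u \col \Nat$, which is the desired existential statement.

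The only real bookkeeping is the nesting of double negations. Because the goal of the lemma is itself of the form $\neg \neg (\dots)$, \autoref{DN_handling} lets me strip the outer $\neg \neg$ obtained from \autoref{Overspill} as well as the inner $\neg \neg$ produced by \autoref{finite_coding_model}, so none of them obstruct the construction of the final code. I do not expect any genuine obstacle here: the extra free variable $e$ plays no active role in either \autoref{Overspill} or in the passage from bounded model quantification to standard numerals, both of which are parametric in the ambient environment; the whole difference from the unary case is notational.
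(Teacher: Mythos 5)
Your proposal is correct and matches the paper's intent exactly: the paper's own proof of this lemma is simply ``Analogous to the proof of \autoref{coding_model_unary}'', and you have spelled out precisely that analogy — instantiating the $\sforall e$ in \autoref{finite_coding_model}, applying \autoref{Overspill}(2) to the resulting unary formula with $e$ held in the environment, and discharging the nested double negations via \autoref{DN_handling}.
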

\begin{proof}
  Analogous to the proof of \autoref{coding_model_unary}.
\end{proof}
These coding results allow us to connect a unary formula \( \alpha \) to an element \( c \col \M \) of the model, in such a way that the decidability of the divisibility for \( c \) will entail the decidability of \( \M \vDash \alpha (\num{\, \cdot \,}) \).

\section{Church's Thesis for First-Order Arithmetic}\label{section_CT}

Church's thesis (\CT) is an axiom of constructive mathematics which states that every total function is computable. We will assume a version of it in this paper, since by its addition to the ambient type theory, we merely need to show that a function can be defined at all, to prove its computability. This makes it possible to stay completely formal, yet achieve a textbook-style conciseness for proofs involving computability, even in their mechanization.

This addition is not possible in every meta-theory. 
If we were to add it to \ZF{}, it would immediately imply the computability of the function that solves the halting problem, leading to an inconsistent theory.
In general however, theories that tend to the constructive side do allow for the consistent addition of this axiom. In the type theory we use in this paper, this is achieved by strictly distinguishing between functional relations and total functions. 
The aforementioned function that solves the halting problem in \ZF{} can only be shown to be a functional relation, which means we can still safely assume total functions to be computable.
Currently, there is no consistency proof for \CT{} (\cf~Section 7.1 in~\cite{forster2021thesis}) and the exact type theory we are using, but there are proofs showing that it can be consistently added to very similar systems~\cite{yamada2020game, swan2019churchs}.

Since \CT{} makes reference to \emph{computability}, its exact form as an axiom does not only depend on the theory in which it is assumed, but also on the model of computation it makes reference to. 
Robinson's \Q{}, a finitely axiomatized arithmetical system, is expressive enough to serve as a computational model, and is a particularly well-suited choice in our case, leading us to the following formulation of \CT{} which we assume for the remainder of the paper:

\setCoqFilename{FOL.Tennenbaum.Church}
\begin{coqaxiom}[\CTQ][CT_Q]
    {For every function \( f : \Nat \to \Nat \) there exists a binary \( \Sigma_1 \)--formula \( \varphi_f(x,y) \)} such that for every \( n \col \Nat \) we have \( \Q \vdash \forall y. \, \varphi_f(\num n, y) \iff \num{f n} = y \).
\end{coqaxiom}
Note that \CTQ{} can be derived from the more conventional version of Church's thesis for \(\mu \)-recursive functions~\cite{kirst2023goedel}.\footnote{
  In~\cite{kirst2023goedel}, the abbreviation \CTQ{} was used to refer to a version of Church's thesis which applies to partial functions. From the partial version, the version for total functions can be derived, and in this paper we use \CTQ{} to refer to the latter, total version.
}
Using \CTQ{} we can get an internal representation \( \varphi_f \) of any computable function \( f \), allowing us to reason about the function inside of first-order arithmetic.
We also have an immediate use-case for \CTQ{}, since applying it to the injective prime function \( \primeFunc{} \) lets us settle the earlier \autoref{remark_prime_formula}:
\begin{coqfact}[ ][prime_div_relation]\label{represent_prime_function}
  There is a binary \( \Sigma_1 \)--formula representing the injective prime function \( \primeFunc{} \) in~\Q. \qed{}
\end{coqfact}
In Section~\ref{section_CTT} we defined decidable and enumerable predicates in a synthetic way, but using \CTQ{} we can now give characterizations and representations of such predicates by formulas in \Q (\cf~\cite{sep-goedel-incompleteness}).
\begin{definition}[ ][weak_repr]
  We call \( p : \Nat \to \Prop \) \coqlink[weak_repr]{\emph{weakly representable}} if there is a \( \Sigma_1 \)--formula \( \varphi_p(x) \) such that
  \( \forall n \col \Nat . \, p \, n \iff \Q \vdash \varphi_p(\num{n}) \),
  and \coqlink[strong_repr]{\emph{strongly representable}} if instead,
  \(  p \, n \to \Q \vdash \varphi_p(\num{n})  \)
  and
  \(  \neg p \, n \to \Q \vdash \neg \varphi_p(\num{n})  \)
  hold for every \( n \col \Nat \).
\end{definition}

\setCoqFilename{FOL.Tennenbaum.Church}
\begin{lemma}[Representability Theorem][CT_RTs]\label{representability_theorem}
  Assume \CTQ, and let \( p : \Nat \to \Prop \) be given.
  \begin{enumerate}
    \coqitem[CT_RTs]{If \( p \) is decidable, it is strongly representable.}
    \coqitem[CT_RTw]{If \( p \) is enumerable, it is weakly representable.}
  \end{enumerate}
\end{lemma}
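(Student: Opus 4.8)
The plan is to treat the two parts by a single mechanism: convert the synthetic witness (a decider or an enumerator) into a total function $\Nat \to \Nat$, feed that function to $\CTQ$ to obtain a binary $\Sigma_1$ graph formula, and then reshape this formula into the desired representation. Throughout I will use that substituting a term for a free variable and prefixing a single existential both preserve $\Sigma_1$-ness (the matrix stays $\Delta_1$, since under any closed-term assignment one lands in a closed instance that $\Q$ decides), and that numeral evaluation in the standard model agrees with the underlying naturals (\autoref{eval_num}).

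For part (1), from a decider $d : \Nat \to \Bool$ for $p$ I would define the indicator $g : \Nat \to \Nat$ by $g\,n = 1$ if $d\,n = \btrue$ and $g\,n = 0$ otherwise, so that $p\,n \leftrightarrow g\,n = 1$ and $\neg p\,n \leftrightarrow g\,n = 0$. Applying $\CTQ$ to $g$ yields a binary $\Sigma_1$-formula $\varphi_g$ with $\Q \vdash \forall y.\, \varphi_g(\num n, y) \leftrightarrow \num{g\,n} = y$. I then set $\varphi_p(x) \cdef \varphi_g(x, \num 1)$, which is still $\Sigma_1$. If $p\,n$, then $g\,n = 1$, and instantiating the graph equivalence at $y \cdef \num 1$ together with reflexivity gives $\Q \vdash \varphi_p(\num n)$; if $\neg p\,n$, then $g\,n = 0$, and the same instantiation combined with the provable distinctness $\Q \vdash \num 0 \neq \num 1$ (an instance of the disjointness axiom) gives $\Q \vdash \neg \varphi_p(\num n)$. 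This is strong representability.

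For part (2), from an enumerator $e : \Nat \to \Opt\Nat$ of $p$ I would build a total function $f : \Nat \to \Nat$ recording the enumeration in shifted form: $f\,m = n + 1$ if $e\,m = \some n$, and $f\,m = 0$ if $e\,m = \none$, so that $p\,n \leftrightarrow \exists m.\, f\,m = n+1$. Applying $\CTQ$ to $f$ gives $\varphi_f$ with $\Q \vdash \forall y.\, \varphi_f(\num m, y) \leftrightarrow \num{f\,m} = y$, and I set $\varphi_p(x) \cdef \sexists w.\, \varphi_f(w, \natS x)$, again $\Sigma_1$. The forward direction is the easy one: if $p\,n$, pick $m$ with $f\,m = n+1$; then $\num{f\,m}$ is literally the term $\natS \num n$, so instantiating the graph equivalence at $y \cdef \natS \num n$ with reflexivity yields $\Q \vdash \varphi_f(\num m, \natS \num n)$, whence $\Q \vdash \varphi_p(\num n)$ by existential introduction.

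The main obstacle is the backward direction of part (2): from $\Q \vdash \varphi_p(\num n)$ I must recover an \emph{actual, standard} natural number $m$ witnessing the enumeration, even though $\Q$ admits non-standard models, so provability of an existential does not by itself yield a standard witness. I would route this through the standard model: since $\Nat \vDash \Q$, soundness turns $\Q \vdash \sexists w.\, \varphi_f(w, \natS \num n)$ into $\Nat \vDash \sexists w.\, \varphi_f(w, \natS \num n)$, and because the domain of $\Nat$ is literally $\Nat$, the semantic witness is a genuine $m : \Nat$ with $\Nat \vDash \varphi_f(\num m, \natS \num n)$. Evaluating the (sound) graph equivalence for $f$ at this $m$ forces $f\,m = n+1$, hence $e\,m = \some n$ and thus $p\,n$. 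The remaining points are routine: confirming the constructed formulas stay $\Sigma_1$, and the numeral-evaluation bookkeeping noted above.
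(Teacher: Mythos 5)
Your proposal is correct and follows essentially the same route as the paper: convert the decider/enumerator into a total function $\Nat\to\Nat$, apply $\CTQ$ to its graph, and instantiate respectively prefix an existential to obtain $\varphi_p$ (the paper merely uses the convention $p\,x \leftrightarrow f\,x = 0$ where you use the indicator value $1$). Your explicit detour through the standard model to extract a standard witness in the backward direction of part (2) is exactly the content of the equivalence chain the paper states more tersely, so there is no substantive difference.
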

\begin{proof}
  \setCoqFilename{FOL.Tennenbaum.SyntheticInType}
  If \( p \) is decidable, then there is a decider \( \Nat \to \Bool \) which can be used to define a function 
  \coqlink[Dec_decider_nat]{\( f \col \Nat \to \Nat \) such that \( \forall x \col \Nat. \, p \, x \leftrightarrow f x = 0 \)},
  and by \CTQ{} there is a binary \( \Sigma_1 \)--formula \( \varphi_f(x,y) \) representing \( f \).
  We then define \( \varphi_p (x) \cdef \varphi_f(x, \num{0}) \) and deduce
  \begin{alignat*}{19}
    p \, n
    &\implies&
    f n = 0
    &\implies&
    &\Q \vdash \num{f n} \sequal \num{0}
    &\implies
    &\Q \vdash \varphi_f(\num{n}, \num{0})
    &\implies
    &\Q \vdash \varphi_p (\num{n})
    \\
    \neg p \, n
    &\implies&
    f n \neq 0
    &\implies&
    &\Q \vdash \sneg (\num{f n} \sequal \num{0})
    &\implies
    &\Q \vdash \sneg \varphi_f(\num{n}, \num{0})
    &\implies
    &\Q \vdash \sneg \varphi_p (\num{n})
  \end{alignat*}
  which shows that \( p \) is strongly representable.

  If \( p \) is enumerable, then there is an enumerator \( \Nat \to \Opt \Nat \) which can be used to define a function 
  \coqlink[enumerable_nat]{\( f : \Nat \to \Nat \) such that \( \forall x \col \Nat. \, p \, x \leftrightarrow \exists n. \, f n = x+1 \)}.
  By \CTQ{} there is a binary \( \Sigma_1 \)--formula \( \varphi_f(x,y) \) representing \( f \).
  We then define \( \varphi_p (x) \cdef \sexists n. \, \varphi_f(n, \natS x)  \) giving us
  \begin{align*}
    \Q \vdash \varphi_p (\num{x})
    ~&\Longleftrightarrow~
    \Q \vdash \sexists n. \, \varphi_f(n, \natS \num{x})
    \Longleftrightarrow~
    \exists n \col \Nat. \, \Q \vdash \varphi_f(\num{n}, \natS \num{x})
    \\&\Longleftrightarrow~
    \exists n \col \Nat. \, \Q \vdash \num{f n} \sequal \natS \num{x}
    \Longleftrightarrow~
    \exists n \col \Nat. \, f n = x + 1
    ~\Longleftrightarrow~
    p \, x
  \end{align*}
  which shows that \( p \) is weakly representable by a \( \Sigma_1 \)--formula.
\end{proof}

\section{Tennenbaum's Theorem}\label{section_Tennenbaum}

With our choice of \( \CTT + \CTQ \) for the meta-theory in place, we now begin with our analysis of Tennenbaum's theorem. 
We will present several proofs of the theorem from the literature.
In a classical meta-theory all of these proofs would yield the same result, but in our constructive setting, they turn out to differ in the strength of their assumptions and conclusions.
Almost all the proofs will make use of some coding results for non-standard models from Section~\ref{section_coding}, enabling us to use a single model element to fully encode the standard part of any predicate \( p : \M \to \Prop \).

For the proof in Section~\ref{section_Tennenbaum_diagonal} we will assume enumerability of the model, enabling a very direct diagonal argument~\cite{boolos2002computability}.
In Section~\ref{section_Tennenbaum_inseparable} we look at the proof approach that is most prominently found in the literature~\cite{smith2014tennenbaum, kaye2011tennenbaum} and uses the existence of recursively inseparable sets.

Another refinement of this proof was proposed in a post by Makholm~\cite{SEmakholm} and comes with the advantage that it circumvents the usage of Overspill.
Strikingly, it turns out that in the constructive setting, this eliminates the necessity for \MP, which is required for the standard proof using inseparable sets.
Additionally, we look at the consequences of Tennenbaum's theorem once the underlying semantics is made explicitly constructive.
The latter two variations are discussed in Section~\ref{section_Variants}.

\subsection{Via a Diagonal Argument}\label{section_Tennenbaum_diagonal}

We start by noting that every \HA{} model can prove the most basic fact about division and remainders.
\setCoqFilename{FOL.Tennenbaum.Peano}
\begin{lemma}[Euclid's Lemma][iEuclid]\label{lemma_Euclid}
  {We have}
  \[
    \M \vDash \sforall e \, d ~ \sexists r \, q. ~ e \sequal q \cdot d + r
    ~\land~
    (0 < d \to r < d)
  \]
  and the {uniqueness property} telling us that if \( r_1, r_2 < d \) then \( q_1 \cdot d + r_1 = q_2 \cdot d + r_2 \) implies \( q_1 = q_2 \) and \( r_1 = r_2 \).
\end{lemma}
\begin{proof}
  For Euclid's lemma, there is a standard proof by induction on \( e \).
  The uniqueness claim requires some basic results about the strict order.
\end{proof}

\needspace{4\baselineskip}
\setCoqFilename{FOL.Tennenbaum.Tennenbaum_diagonal}
\begin{lemma}[ ][dec_div]\label{decidable_div}
  {If \( \M \) is enumerable and discrete, then \( \fun (n \col \Nat)  (d \col \M). \, \M \vDash \num{n} \! \mid \! d \) has a decider.}
\end{lemma}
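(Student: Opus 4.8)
The plan is to reduce divisibility to a remainder computation: Euclid's Lemma guarantees that a \emph{bounded} remainder always exists, and the point is to extract it effectively using the enumerability and discreteness of \( \M \).

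First I would dispose of the case \( n = 0 \) directly. Since \( \M \vDash \num 0 \cdot q = \num 0 \) for every \( q \), we have \( \M \vDash \num 0 \mid d \) if and only if \( d = \num 0 \), which is decidable because \( \M \) is discrete. For \( n \geq 1 \) I would appeal to \autoref{lemma_Euclid}, which yields---semantically---elements \( q, r \col \M \) with \( d = q \cdot \num n + r \) and \( r < \num n \). The crucial observation is that \( r < \num n \) forces \( r \) to be standard: one proves in the model, by external induction on \( n \), the fact \( \M \vDash \forall x.\, x < \num n \to (x = \num 0 \lor \dots \lor x = \num{n-1}) \), so in fact \( r = \num m \) for a concrete \( m < n \). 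Consequently \( \M \vDash \num n \mid d \) holds exactly when this remainder is zero, i.e.\ when \( m = 0 \).

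The remaining work is to turn the merely propositional existence of the pair \( (q, r) \) into an actual decision, and this is where enumerability and discreteness enter. Let \( g \col \Nat \to \Opt \M \) be an enumerator of \( \M \). I would define a Boolean test \( f \col \Nat \to \Bool \) by letting \( f\,i = \btrue \) precisely when \( g\,i = \some q \) for some \( q \) and the finite disjunction \( \bigvee_{m < n} (d = q \cdot \num n + \num m) \) holds; each disjunct is a decidable equality on \( \M \) by discreteness, so \( f \) is well-defined. By the previous paragraph together with surjectivity of \( g \), there is some index \( i \) with \( f\,i = \btrue \), so \autoref{Nat_witnessing} (witnessing of \( \Nat \)) produces a concrete such \( i \), hence a concrete quotient \( q = g\,i \) and a concrete \( m < n \) with \( d = q \cdot \num n + \num m \). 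The decider then returns \( \btrue \) iff \( m = 0 \).

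The main obstacle---and the step that makes all three hypotheses indispensable---is precisely this extraction of a computational witness from the existential supplied by Euclid's Lemma. What makes it sound is the \emph{uniqueness} part of \autoref{lemma_Euclid}: any pair \( (q, \num m) \) with \( m < n \) satisfying \( d = q \cdot \num n + \num m \) must be the unique quotient--remainder decomposition, so whichever witness the search returns carries the correct remainder, and the test \( m = 0 \) faithfully decides \( \M \vDash \num n \mid d \). Discreteness guarantees the per-candidate equalities are decidable, enumerability reduces the unbounded search over \( \M \) to a search over \( \Nat \), and \autoref{Nat_witnessing} closes the gap between semidecidability and genuine decidability.
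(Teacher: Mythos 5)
Your proof is correct and follows essentially the same route as the paper's: Euclid's Lemma supplies the propositional existence of a quotient--remainder pair, the enumerator turns this into an existential over \( \Nat \) whose body is decidable by discreteness, witnessing of \( \Nat \) extracts a concrete witness, and the uniqueness clause of \autoref{lemma_Euclid} justifies testing whether the found remainder is zero. The only (harmless) differences are that you search only for the quotient and recover the remainder as a concrete numeral \( \num m \) with \( m < n \) via a finite disjunction, whereas the paper searches \( \Nat^2 \) for both components, and that you treat \( n = 0 \) separately --- a case the paper's prose glosses over.
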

\begin{proof}
  Let \( n \col \Nat \) and \( d \col \M \) be given.
  By \autoref{lemma_Euclid} we have \( \exists q', r' \col \M. \, d = q' \cdot \num n + r' \).
  This existence is propositional, so presently we cannot use it to give a decision for \( \num n \mid d \).
  \setCoqFilename{FOL.Tennenbaum.SyntheticInType}
  \coqlink[enumerable_surjection]{Since \M is enumerable, there is a surjective function \( g : \Nat \to \M \)} 
  and the above existence therefore shows \( \exists q, r \col \Nat. \, d = (g \, q) \cdot \num n + (g \, r) \).
  Since equality is decidable in \M and 
  \setCoqFilename{FOL.Tennenbaum.MoreDecidabilityFacts}
  \coqlink[Witnessing_natnat]{\( \Nat^2 \) is witnessing}, 
  we get \( \Sigma q, r \col \Nat. \, d = (g \, q) \cdot \num n + (g \, r) \), giving us computational access to \( r \), now allowing us to construct the decision.
  By the uniqueness part of \autoref{lemma_Euclid} we have \( g \, r = 0 \iff \num n \mid d \), so the decidability of \( \num n \mid d \) is entailed by the decidability of \( g \, r = 0 \).
\end{proof}

\setCoqFilename{FOL.Tennenbaum.Tennenbaum_diagonal}
\begin{lemma}[ ][Stable_std_stable_stdModel]\label{MP_std_stable}
  \begin{enumerate}
    \coqitem[Stable_std_stable_stdModel]{If \( \standardElement{} \) is stable, then \( \M \cong \Nat \) is stable.}
    \coqitem[Stable_std_negn_nonStd__sdtMobel_equiv]{If \( \standardElement{} \) is stable, then \( \neg \M > \Nat \) is equivalent to \( \M \cong \Nat \).}
    \coqitem[MP_Discrete_stable_std]{Assuming \MP{}, if \M is discrete, then \( \standardElement{} \) is stable}.
  \end{enumerate}
\end{lemma}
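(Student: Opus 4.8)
The plan is to treat the three statements independently: the two parts about stability (1) and (2) reduce to routine logical manipulation once standardness is put in pointwise form via \autoref{std_model_equiv_definitions}, while the real work is in (3), where Markov's principle enters.

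For (1), I would first rewrite $\M \cong \Nat$ as the pointwise statement $\forall e \col \M.\, \standardElement{e}$ using \autoref{std_model_equiv_definitions}; since stability transfers across logical equivalences, it suffices to show that $\forall e.\,\standardElement{e}$ is stable whenever each $\standardElement{e}$ is. This is the general fact that $\forall$ preserves stability: assuming $\neg\neg(\forall e.\,\standardElement{e})$ and fixing an arbitrary $e$, it suffices by pointwise stability to derive $\neg\neg\standardElement{e}$, and indeed any $e$ with $\neg\standardElement{e}$ would immediately refute $\forall e'.\,\standardElement{e'}$, contradicting the double-negated hypothesis. So this is essentially a one-line argument after the reformulation.

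For (2), the implication $\M \cong \Nat \to \neg\,\M > \Nat$ is just the unconditional fact $\M > \Nat \to \M \not\cong \Nat$ recorded above, taken contrapositively, and needs no hypothesis. For the converse I would unfold $\neg\,\M > \Nat$, which is $\neg\exists e.\,\neg\standardElement{e}$, into the constructively equivalent $\forall e.\,\neg\neg\standardElement{e}$, and then apply the stability of $\standardElement{}$ pointwise to obtain $\forall e.\,\standardElement{e}$, i.e. $\M \cong \Nat$ by \autoref{std_model_equiv_definitions}. Thus the whole content of the hypothesis is precisely the bridge from $\neg\neg\standardElement{e}$ to $\standardElement{e}$.

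For (3), which is the heart of the lemma, I would build a test function for \MP{} out of discreteness. Given $e \col \M$, discreteness of $\M$ supplies a decider for equality on the domain; composing it with the embedding $\Inum{}$ yields a total function $f \col \Nat \to \Nat$ with $f\,n = 0$ exactly when $\num{n} = e$ (computing $\num{n} = \Inum{n}$ by its recursive definition and reading the decider's $\btrue$ as $0$ and $\bfalse$ as $1$). Then $\exists n.\, f\,n = 0$ is equivalent to $\standardElement{e}$, so $\neg\neg\standardElement{e}$ becomes $\neg\neg(\exists n.\, f\,n = 0)$, and \MP{} converts this into $\exists n.\, f\,n = 0$, hence $\standardElement{e}$. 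The main obstacle here is conceptual rather than technical: one must recognize that discreteness is exactly what lets standardness of a fixed $e$ be presented as a decidable-in-$n$ existential over $\Nat$, which is precisely the shape to which \MP{} applies; everything in (1) and (2) is then bookkeeping about how stability propagates through $\forall$ and through the two formulations of standardness.
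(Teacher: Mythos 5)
Your proposal is correct and follows essentially the same route as the paper: (1) and (2) reduce to the pointwise characterization of \( \M \cong \Nat \) via \autoref{std_model_equiv_definitions} together with routine double-negation bookkeeping, and (3) uses discreteness to turn \( \standardElement{\,e} = \exists n.\,\num{n} = e \) into an existential over a decidable predicate on \( \Nat \), to which \MP{} applies. No gaps.
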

\begin{proof}
  \( (1) \) is trivial by \autoref{std_model_equiv_definitions}.
  \( (2) \) The implication \( \forall e. \, \standardElement{\, e} \to \neg \exists e. \, \neg \standardElement{\, e} \) holds constructively, but the converse needs the stability of \( \standardElement{} \).
   For \( (3) \), recall that \( \standardElement{\, e} \) stands for \( \exists n \col \Nat. \, \num{n} = e \).
  Since \( \num{n} = e \) in \M is decidable, stability follows from \autoref{Nat_witnessing}.
\end{proof}
\begin{lemma}[ ][Coding_Dec]\label{decidable_coding}
  {If \( \standardElement{} \) is stable, \( \M \not \cong \Nat \), and \( p \col \Nat \to \Prop \) decidable, then potentially there} is a code \( c \col \M \) such that
  \(
    \forall n \col \Nat. ~
    p \, n \iff \M \vDash \num{\primeFunc{n}} \mid c
  \).
\end{lemma}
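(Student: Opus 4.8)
The plan is to internalize the decidable predicate $p$ as a first-order formula using $\CTQ$, and then feed this formula into the infinite coding lemma \autoref{coding_model_unary}. Concretely, since $p$ is decidable, \autoref{representability_theorem}(1) yields a $\Sigma_1$-formula $\varphi_p(x)$ that \emph{strongly} represents $p$ in $\Q$, i.e.\ $p\,n \to \Q \vdash \varphi_p(\num n)$ and $\neg p\,n \to \Q \vdash \neg\varphi_p(\num n)$ for every $n \col \Nat$.

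The first step is to transfer this representation from provability in $\Q$ to satisfaction in $\M$. Every $\HA$-model satisfies the axioms of $\Q$ — the case axiom $\sforall x.\, x = \szero \sor \sexists y.\, x = S y$ being derivable from induction, while the finite core and equality axioms are shared — so $\M \vDash \Q$, and soundness (\autoref{tsoundness}) lets us push both halves of strong representability into $\M$. Combined with the decidability of $p$, this gives the clean biconditional
\[
  \forall n \col \Nat. \quad p\,n \iff \M \vDash \varphi_p(\num n).
\]
The forward direction is immediate from $p\,n \to \Q \vdash \varphi_p(\num n)$; for the backward direction, assume $\M \vDash \varphi_p(\num n)$ and decide $p\,n$: the case $\neg p\,n$ would give $\Q \vdash \neg\varphi_p(\num n)$ and hence $\M \vDash \neg\varphi_p(\num n)$, contradicting the assumption, so $p\,n$ holds.

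Next I would apply \autoref{coding_model_unary} with $\alpha \cdef \varphi_p$. Since $\standardElement{}$ is stable and $\M \not\cong \Nat$, this potentially produces a code $c \col \M$ with $\forall u \col \Nat.\; \M \vDash \varphi_p(\num u) \iff \M \vDash \Pi(\num u) \mid c$. Chaining with the biconditional above yields, for every $n$, the equivalence $p\,n \iff \M \vDash \Pi(\num n) \mid c$; because both per-element rewrites are plain (not doubly negated) facts, they may be applied underneath the $\neg\neg$ inherited from \autoref{coding_model_unary}, matching the word ``potentially'' in the claim. It then remains to replace the object-level abbreviation $\Pi(\num n) \mid c$ by the numeral divisibility $\num{\primeFunc n} \mid c$ appearing in the statement: since $\Pi$ represents $\primeFunc{}$ (\autoref{represent_prime_function}), we have $\M \vDash \Pi(\num n, \num{\primeFunc n})$, and functionality of $\Pi$ in $\M$ makes $\M \vDash \Pi(\num n) \mid c$ and $\M \vDash \num{\primeFunc n} \mid c$ equivalent.

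The main obstacle I anticipate is not a single deep step but the careful bookkeeping between the three guises of ``$\primeFunc n$ divides $c$'': the meta-level $\num{\primeFunc n} \mid c$, the object-level abbreviation $\Pi(\num n) \mid c = \sexists q.\, \Pi(\num n, q) \sand q \mid c$, and their interpretations in $\M$; reconciling these rests on the functionality of $\Pi$ inside $\M$. A secondary point worth emphasizing is that decidability of $p$ — rather than mere enumerability — is exactly what upgrades weak to strong representability, and hence delivers the genuine biconditional $p\,n \iff \M \vDash \varphi_p(\num n)$ without introducing any further double negation.
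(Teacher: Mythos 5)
Your proposal is correct and follows essentially the same route as the paper: strong representability of the decidable $p$ via \autoref{representability_theorem}, transfer to $\M$ by soundness, the coding result \autoref{coding_model_unary} applied to $\varphi_p$, and the use of decidability of $p$ to turn the contrapositive half into the backward implication of the biconditional. The only difference is that you spell out the final bookkeeping step identifying $\Pi(\num n)\mid c$ with $\num{\primeFunc n}\mid c$ via the functionality of $\Pi$, which the paper's proof leaves implicit.
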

\begin{proof}
  By \autoref{representability_theorem}, there is a formula \( \varphi_p \) strongly representing \( p \).
  Under the given assumptions, we can use the coding \autoref{coding_model_unary}, yielding a code \( c \col \M \) for the formula \( \varphi_p \), such that \( \forall u \col \Nat. \, \M \vDash \varphi_p(\num{u}) \iff \Pi(\num{u}) \mid c \).
  Overall this shows:
  \begin{alignat*}{9}
    p \, n
    &\implies
    \Q \vdash \varphi_p(\num{n})
    &\implies&
    &\M \vDash \varphi_p(\num{n})
    &\implies&
    &\M \vDash \Pi(\num{n}) \mid c
    \\
    \neg p \, n
    &\implies
    \Q \vdash \neg \, \varphi_p(\num{n})
    &\implies&
    \neg \, &\M \vDash \varphi_p(\num{n})
    &\implies&
    \neg \, &\M \vDash \Pi(\num{n}) \mid c.
  \end{alignat*}
  Since \( p \) is decidable, the latter implication entails \( \M \vDash \Pi(\num{n}) \mid c \implies p \, n \), which overall shows the desired equivalence.
\end{proof}
This gives us the following version of Tennenbaum's theorem:
\begin{theorem}[ ][Tennenbaum_diagonal_ct]\label{Tennenbaum_diagonal}
  {Assuming \MP{} and discrete \( \M \), enumerability of \( \M \) implies \( \M \cong \Nat \).}
\end{theorem}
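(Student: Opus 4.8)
The plan is to run the classical diagonal argument of Boolos~\cite{boolos2002computability} in our constructive setting, exploiting that the hypotheses \MP{} and discreteness make standardness---and hence isomorphy with \Nat{}---stable. First I would observe that by \autoref{MP_std_stable}(3) the assumptions \MP{} and discreteness of \M{} yield that \( \standardElement{} \) is stable, whence by \autoref{MP_std_stable}(1) the proposition \( \M \cong \Nat \) is itself stable. It therefore suffices to establish \( \neg \neg (\M \cong \Nat) \), that is, to assume \( \M \not \cong \Nat \) and derive a contradiction. The whole argument thus takes place under the standing hypotheses that \( \standardElement{} \) is stable and \( \M \not \cong \Nat \), which are exactly the premises required by the coding \autoref{decidable_coding}.

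Under these assumptions I would diagonalize against the family of predicates that are codeable by single model elements. Using enumerability of \M{}, fix a surjection \( g : \Nat \to \M \) and define the predicate
\[
  p \, n \cdef \neg \, \big( \M \vDash \num{\primeFunc{n}} \mid g(n) \big).
\]
Since \( \primeFunc{} \) and \( g \) are functions and, by \autoref{decidable_div}, the relation \( \fun (m \col \Nat)(d \col \M). \, \M \vDash \num m \mid d \) has a decider, this \( p \) is decidable: one simply feeds \( \primeFunc{n} \) and \( g(n) \) to that decider and negates the resulting Boolean. This is the step where enumerability and discreteness of \M{} are genuinely used.

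Finally I would derive the contradiction. As \( p \) is decidable and we are under the hypotheses of \autoref{decidable_coding}, that lemma potentially supplies a code \( c \col \M \) with \( \forall n \col \Nat. \, p \, n \iff \M \vDash \num{\primeFunc{n}} \mid c \). Because \( g \) is surjective there is some \( k \) with \( g(k) = c \); instantiating the coding equivalence at \( k \) gives \( p \, k \iff \M \vDash \num{\primeFunc{k}} \mid g(k) \), and since by definition \( p \, k \) is the negation of \( \M \vDash \num{\primeFunc{k}} \mid g(k) \), this yields \( p \, k \iff \neg \, p \, k \), an absurdity. The code is only provided behind a double negation, but since our goal at this point is \( \bot \), which is stable, this \( \neg \neg \) can be discharged (\cf~\autoref{DN_handling}). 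The main obstacle is not the diagonalization itself but the logical bookkeeping that makes it go through constructively: the reduction to a refutation of \( \M \not \cong \Nat \) hinges entirely on the stability of \( \standardElement{} \) secured by \MP{}, and the final contradiction relies on the harmless propagation of the \( \neg \neg \) from \autoref{decidable_coding} into a negative goal.
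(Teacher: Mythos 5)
Your proposal is correct and follows essentially the same route as the paper's proof: reduce to refuting \( \M \not \cong \Nat \) via \autoref{MP_std_stable}, diagonalize with \( p \, n \cdef \neg \, \M \vDash \num{\primeFunc{n}} \mid g \, n \) using \autoref{decidable_div} and \autoref{decidable_coding}, and derive the contradiction at a preimage of the code under \( g \). Your explicit remark on discharging the \( \neg \neg \) from the coding lemma against the negative goal is a detail the paper leaves implicit but is exactly right.
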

\begin{proof}
  By \autoref{MP_std_stable} it suffices to show \( \neg \neg \M \cong \Nat \).
  So assume \( \M \not \cong \Nat \) and try to derive \( \bot \).
  \setCoqFilename{FOL.Tennenbaum.SyntheticInType}
  \coqlink[enumerable_surjection]{Given the enumerability, there is a surjective function \( g \col \Nat \to \M \)}, 
  allowing us to define the predicate \( p \cdef \fun n \col \Nat. \, \neg \, \M \vDash \num{\primeFunc{n}} \mid g \, n \), which is decidable by \autoref{decidable_div}.
  By the coding result in \autoref{decidable_coding} there is an \(e \col \M \) which codes \( p \), and by the surjectivity of \( g \), there is some \( c \col \Nat \) with \(g \, c = e\). Combined, these facts give us
  \begin{align*}
    \neg \, \M \vDash \num{\primeFunc{c}} \mid g \, c
    ~\stackrel{\text{def.}}{\Iff}~ p \, c
    ~\stackrel{\text{\text{coding}}}{\Iff}~  \M \vDash \num{\primeFunc{c}} \mid g \, c
  \end{align*} 
  leading to the desired contradiction.
\end{proof}

\subsection{Via Inseparable Predicates}\label{section_Tennenbaum_inseparable}

The most frequently reproduced proof of Tennenbaum's theorem~\cite{kaye2011tennenbaum, smith2014tennenbaum} uses the existence of recursively inseparable sets and non-standard coding to establish the existence of a non-recursive set.

\setCoqFilename{FOL.Tennenbaum.Tennenbaum_insep}
\begin{definition}[ ][Insep_pred]\label{def_inseparable_predicates}
  A pair \( A, B : \Nat \to \Prop \) of predicates is called {\emph{inseparable}} if they are disjoint and \( A \incl D \), \( B \incl \neg D \) implies the undecidability of \( D \).
\end{definition}

\needspace{4\baselineskip}
\begin{lemma}[ ][CT_Insep_pred]\label{lemma_inseparable_predicates}
  {There are inseparable enumerable predicates \( A, B : \Nat \to \Prop \).}
\end{lemma}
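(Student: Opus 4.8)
The plan is to build $A$ and $B$ from \emph{provability in $\Q$} together with a G\"odel-style diagonal substitution, and to extract inseparability from the Representability Theorem (\autoref{representability_theorem}), which is exactly the place where $\CTQ$ enters. First I would fix a surjective enumeration $\enumform{\cdot} : \Nat \to \Form$ of all unary formulas; this exists because formulas over $\Sigma_\PA$ form a discrete, enumerable type. Writing $\enumform{n}(\num n)$ for the substitution of the numeral $\num n$ into the $n$-th formula, I then set
\[
  A \cdef \fun n\col\Nat.\ \Q \vdash \enumform{n}(\num n)
  \qquad\text{and}\qquad
  B \cdef \fun n\col\Nat.\ \Q \vdash \neg\,\enumform{n}(\num n).
\]
The crucial point is that the self-reference is already baked into this diagonal substitution, so I expect to avoid any explicit recursion/fixpoint theorem.

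For \textbf{enumerability} I would use that the set of $\Q$-theorems is enumerable (natural deduction is an inductive predicate over the enumerable, discrete type of formulas), and that enumerable predicates are closed under computable preimages; since $n \mapsto \enumform{n}(\num n)$ and $n \mapsto \neg\,\enumform{n}(\num n)$ are (total, hence synthetically computable) functions $\Nat \to \Form$, both $A$ and $B$ are enumerable. For \textbf{disjointness} I would argue by consistency of $\Q$: if $\Q \vdash \enumform{n}(\num n)$ and $\Q \vdash \neg\,\enumform{n}(\num n)$ held simultaneously, then $\Q \vdash \bot$, and by Soundness together with $\Nat \vDash \Q$ this gives $\Nat \vDash \bot$, which is absurd.

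For \textbf{inseparability}, suppose $D \col \Nat \to \Prop$ is decidable with $A \incl D$ and $B \incl \neg D$; I aim to derive $\bot$. By \autoref{representability_theorem}(1), the decidable $D$ is strongly representable by a unary $\Sigma_1$--formula $\varphi_D$, so $D\,n \to \Q \vdash \varphi_D(\num n)$ and $\neg D\,n \to \Q \vdash \neg\,\varphi_D(\num n)$. Using surjectivity of the enumeration I pick $m$ with $\enumform{m} = \neg\,\varphi_D$, so that $\enumform{m}(\num m) = \neg\,\varphi_D(\num m)$. Since $D$ is decidable I may case split on $D\,m$. If $D\,m$, then $\Q \vdash \varphi_D(\num m)$, hence $\Q \vdash \neg\neg\,\varphi_D(\num m) = \neg\,\enumform{m}(\num m)$ (using $\psi \to \neg\neg\psi$ intuitionistically), i.e.\ $B\,m$, whence $\neg D\,m$ by $B \incl \neg D$ --- contradicting $D\,m$. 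If $\neg D\,m$, then $\Q \vdash \neg\,\varphi_D(\num m) = \enumform{m}(\num m)$, i.e.\ $A\,m$, whence $D\,m$ by $A \incl D$ --- again a contradiction. Thus $D$ cannot be a decidable separator.

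The main obstacle I anticipate is purely infrastructural rather than conceptual: setting up a genuinely surjective enumeration of unary formulas and confirming the enumerability of $\Q$-provability (and its closure under the computable substitution map) with enough precision to conclude $A,B$ enumerable. Once that plumbing is in place, the conceptual core --- the self-referential choice $\enumform{m} = \neg\,\varphi_D$ followed by the decidable case analysis --- is short, and notably cleaner than the classical Kleene argument, since indexing directly into the syntactic enumeration replaces the appeal to the recursion theorem.
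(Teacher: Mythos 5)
Your proposal is correct and follows essentially the same route as the paper: a diagonal substitution \( \enumform{n}(\num n) \) over an enumeration of formulas, enumerability of \( \Q \)-provability for enumerability of \( A \) and \( B \), consistency of \( \Q \) for disjointness, and strong representability (\autoref{representability_theorem}) of a hypothetical decidable separator to reach the contradiction. The only cosmetic differences are that you swap the roles of \( A \) and \( B \) and index the diagonal at \( \neg\varphi_D \) rather than at the representing formula \( \varphi_D \) itself, which costs an extra (harmless) double-negation introduction and an unneeded case split on \( D\,m \).
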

\begin{proof}
  We use an \coqlink[surj_form_]{enumeration \( \enumform{n} : \Form \) of formulas} to define predicates \( A \cdef \lambda n \col \Nat. \, \Q \vdash \neg \, \enumform{n}(\num{n})  \) and \( B \cdef \lambda n \col \Nat. \, \Q \vdash \enumform{n}(\num{n}) \), which are disjoint since \( \Q \not \vdash \bot \), and enumerable since \coqlink[enumerable_Q_prv]{proofs over \Q can be enumerated}.
  Given a predicate \( D \) with \( A \incl D \), \( B \incl \neg D \) and assuming it were decidable, \autoref{representability_theorem} gives us a formula strongly representing \( D \), and by the enumeration there is \( d \col \Nat \) such that \( \enumform d \) is said formula.
  Everything together gives us the following chain of implications:
  \begin{align*}
    D \, d 
    \stackrel{\text{s.repr.}}{\implies} \Q \vdash \Phi_d(\num d)
    \stackrel{\text{def.}}{\implies} B \, d
    \stackrel{\incl}{\implies} \neg D \, d
    \stackrel{\text{s.repr.}}{\implies} \Q \vdash \neg \Phi_d(\num d)
    \stackrel{\text{def.}}{\implies} A \, d
    \stackrel{\incl}{\implies} D \, d
  \end{align*}
  Since this shows \( D \, d \Iff \neg D \, d \), we must conclude that \( D \) is undecidable.
\end{proof}
\begin{corollary}[ ][CT_Insep_form]\label{inseparable_formulas}
  There is a pair \( \alpha, \beta \) of unary \( \Sigma_1 \)--formulas such that \( A \cdef \lambda n \col \Nat. \, \Q \vdash \alpha(\num{n}) \) and \( B \cdef \lambda n \col \Nat. \, \Q \vdash \beta(\num{n}) \) are inseparable and enumerable.
\end{corollary}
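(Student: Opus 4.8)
The plan is to transport the inseparable enumerable predicates supplied by \autoref{lemma_inseparable_predicates} through the Representability Theorem. Concretely, let \( A_0, B_0 : \Nat \to \Prop \) be the inseparable enumerable predicates obtained from that lemma. Since both are enumerable, the second clause of \autoref{representability_theorem} yields \( \Sigma_1 \)--formulas \( \alpha \) and \( \beta \) weakly representing them, i.e.\ \( \forall n. \, A_0 \, n \iff \Q \vdash \alpha(\num{n}) \) and \( \forall n. \, B_0 \, n \iff \Q \vdash \beta(\num{n}) \). Crucially, this clause delivers \( \Sigma_1 \)--formulas on the nose, so the arity and syntactic-class requirements of the statement are met directly.

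Next I would set \( A \cdef \lambda n \col \Nat. \, \Q \vdash \alpha(\num{n}) \) and \( B \cdef \lambda n \col \Nat. \, \Q \vdash \beta(\num{n}) \), exactly as demanded. By weak representability these predicates are \emph{pointwise equivalent} to the originals, i.e.\ \( \forall n. \, A \, n \iff A_0 \, n \) and \( \forall n. \, B \, n \iff B_0 \, n \). It then remains to observe that both inseparability and enumerability are invariant under pointwise equivalence of predicates, which transfers the two properties from \( A_0, B_0 \) to \( A, B \).

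For enumerability this is immediate: the very same enumerator witnesses enumerability of a pointwise-equivalent predicate. For inseparability I would unfold \autoref{def_inseparable_predicates}. Disjointness carries over since \( A \, n \to A_0 \, n \) and \( B \, n \to B_0 \, n \). For the undecidability clause, any separator \( D \) with \( A \incl D \) and \( B \incl \neg D \) is, by pointwise equivalence, also a separator satisfying \( A_0 \incl D \) and \( B_0 \incl \neg D \); hence \( D \) is undecidable by inseparability of \( A_0, B_0 \). This closes the argument.

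I do not anticipate a genuine obstacle here: the mathematical content lives entirely in \autoref{lemma_inseparable_predicates} together with the weak-representability half of \autoref{representability_theorem}, and what is left is only the routine—though worth recording—fact that the relevant predicate properties respect pointwise equivalence. The single point that deserves a moment of care is making sure the separator clause of inseparability is phrased in terms of the defining predicates \( A, B \) and not their \( \Sigma_1 \)--representatives, so that the transport of the \( \incl \)-inclusions along pointwise equivalence goes through cleanly in both directions.
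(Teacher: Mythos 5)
Your proposal is correct and takes essentially the same route as the paper, which simply applies weak representability (\autoref{representability_theorem}) to the inseparable enumerable predicates from \autoref{lemma_inseparable_predicates}; your additional remarks about transporting inseparability and enumerability along pointwise equivalence just make explicit what the paper leaves implicit.
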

\begin{proof}
  Use weak representability (\autoref{representability_theorem}) on the predicates given by \autoref{lemma_inseparable_predicates}.
\end{proof}
Contrary to the proof delineated in Section~\ref{section_Tennenbaum_diagonal}, the alternative proof via inseparable sets eliminates the need for enumerability of the model. 
Furthermore, we will now factor Tennenbaum's theorem into two parts, mirroring what Kaye does in~\cite{kaye2011tennenbaum}. 
He first shows that for any non-standard model \M, a non-recursive coded set exists within \M, and separately to this, he establishes that computability of the model would entail the recursiveness of all coded sets.
Combining both of these results then yields Tennenbaum's theorem.
In the following we will focus on the first part, and establish the existence of an undecidable predicate, which is coded by an element of \M.
Our coding here concretely uses prime numbers, but in Section~\ref{section_root_of_Tennenbaum}, we will revisit a more abstract perspective on the coding.
\begin{definition}[ ][Div_nat]
  For \( d \col \M \) define the predicate \( \divNat d \cdef \fun n \col \Nat. \, \M \vDash \num{n} \mid d \).
\end{definition}

\begin{theorem}[ ][Tennenbaum_inseparable]\label{tennenbaum-inseparable}
  Assuming stability of \(\standardElement{}\), then \( \M > \Nat \) implies \( \neg \neg \exists d \col \M. \, \neg \Dec{(\divNat d)} \).
\end{theorem}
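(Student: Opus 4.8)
The plan is to contrapose the double negation. To establish \( \neg\neg\exists d\col\M.\,\neg\Dec{(\divNat d)} \) it suffices to assume \( \neg\exists d\col\M.\,\neg\Dec{(\divNat d)} \) and derive a contradiction. This assumption is intuitionistically equivalent to \( \forall d\col\M.\,\neg\neg\Dec{(\divNat d)} \), and since the goal has now become \( \bot \), \autoref{DN_handling} lets me freely strip double negations from any fact I invoke along the way.

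First I would observe that \( \M > \Nat \) gives \( \M \not\cong \Nat \), so together with the stability hypothesis the coding \autoref{coding_model_unary} is applicable. I take an inseparable pair of unary \( \Sigma_1 \)--formulas \( \alpha, \beta \) from \autoref{inseparable_formulas}, with \( A \cdef \lambda n.\,\Q\vdash\alpha(\num n) \) and \( B \cdef \lambda n.\,\Q\vdash\beta(\num n) \) inseparable. Applying \autoref{coding_model_unary} to \( \alpha \) yields---after stripping the outer \( \neg\neg \)---a code \( c\col\M \) with \( \M\vDash\alpha(\num n)\iff\M\vDash\Pi(\num n)\mid c \) for every \( n\col\Nat \). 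From the standing assumption I likewise strip \( \neg\neg\Dec{(\divNat c)} \) to obtain an actual decider for \( \divNat c \).

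The key construction is the separating set. Since \( \primeFunc{} \) is a function on \( \Nat \), precomposing the decider of \( \divNat c \) with \( \primeFunc{} \) shows that \( D \cdef \lambda n\col\Nat.\,\M\vDash\num{\primeFunc n}\mid c \) is decidable, and by the coding equivalence (using that \( \Pi \) represents \( \primeFunc{} \) and \( \M\vDash\Q \)) we have \( D\,n\iff\M\vDash\alpha(\num n) \). I then verify that \( D \) separates \( A \) and \( B \). For \( A\incl D \): if \( \Q\vdash\alpha(\num n) \) then \( \M\vDash\alpha(\num n) \) by soundness and \( \M\vDash\Q \) (as \( \M\vDash\HA \)), hence \( D\,n \). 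For \( B\incl\neg D \): if \( \Q\vdash\beta(\num n) \) then \( \M\vDash\beta(\num n) \), and provided \( \alpha \) and \( \beta \) are provably incompatible---i.e.\ \( \Q\vdash\neg(\alpha(\num n)\sand\beta(\num n)) \)---soundness gives \( \M\vDash\neg(\alpha(\num n)\sand\beta(\num n)) \), so \( \M\vDash\alpha(\num n) \) is impossible, i.e.\ \( \neg D\,n \). Inseparability of \( A, B \) now forces \( D \) to be undecidable, contradicting the decidability just established, and this \( \bot \) finishes the argument.

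The main obstacle is precisely the upper bound \( B\incl\neg D \). Because \( \alpha \) is only \( \Sigma_1 \), the model satisfaction \( \M\vDash\alpha(\num n) \) may hold through \emph{non-standard} witnesses even when no standard witness exists, so the coded set \( D \) can in general overshoot \( A \); mere weak representability of \( A \) by \( \alpha \) does not rule this out. The argument therefore hinges on the inseparable formulas being chosen \emph{provably disjoint}, so that their incompatibility transfers into \( \M \) and caps \( D \) below \( \neg B \). I expect this provable disjointness to be exactly the property that the construction behind \autoref{inseparable_formulas} must supply, and I anticipate that securing it---rather than the coding step or the double-negation bookkeeping---will be the delicate point.
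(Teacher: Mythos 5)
Your setup (double-negation bookkeeping, stripping $\neg\neg$ against the goal $\bot$, obtaining a decider for $\divNat c$, and the lower bound $A \incl D$ via soundness) is fine, and you have correctly located the crux: the upper bound $B \incl \neg D$ fails for the raw $\Sigma_1$-formula $\alpha$, because $\M \vDash \alpha(\num n)$ can hold via a non-standard witness. But your proposed resolution does not work. \autoref{inseparable_formulas} does \emph{not} supply provable disjointness $\Q \vdash \neg(\alpha(\num n) \sand \beta(\num n))$; it only gives disjointness of the meta-level predicates $A$ and $B$. Object-level provable disjointness is precisely the extra strength of \emph{$\HA$-inseparability} (\autoref{def_HA_inseparable}), which the paper obtains only later via Rosser's trick (\autoref{lemma_HA_inseparable}) and uses for the Makholm and McCarty variants — notably without needing stability of $\standardElement{}$. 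If your route were available here, the stability hypothesis in the present theorem would be superfluous.

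The paper closes the gap differently. Using $\Sigma_1$--compression, write the inseparable formulas as $\sexists w.\,\alpha(w,x)$ and $\sexists w.\,\beta(w,x)$ with $\alpha,\beta$ being $\Delta_1$. Meta-level disjointness makes the \emph{bounded} disjointness statement $\sforall x\, w\, v \sless \num{n}.~\alpha(w,x) \simp \beta(v,x) \simp \bot$ true in $\Nat$ for every standard $n$; being $\Delta_1$, it transfers to $\M$ by \autoref{Delta1_absoluteness}, and Overspill (\autoref{Overspill} — this is where stability of $\standardElement{}$ is used) yields a non-standard $e$ below which $\alpha$ and $\beta$ are disjoint in $\M$. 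The separating set is then the \emph{$e$-bounded} formula $D \cdef \lambda n.\,\M \vDash \sexists w \sless e.\,\alpha(w,\num n)$, coded via the binary coding lemma (\autoref{coding_model_binary}, not the unary \autoref{coding_model_unary}, since $e$ occurs as a parameter). The lower bound holds because standard witnesses are automatically below the non-standard $e$; the upper bound holds because any witness for $\alpha$ below $e$ would clash with the $e$-bounded disjointness. Without this bounding step your argument cannot be completed from the ingredients you cite.
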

\begin{proof}
  We will assume \( \neg \exists d. \, \neg \Dec{(\divNat d)} \) and try to reach a contradiction.
  By \autoref{inseparable_formulas} there is a pair \( \alpha', \beta' \) of inseparable unary \( \Sigma_1 \)--formulas. 
  By
  \setCoqFilename{FOL.Incompleteness.sigma1}
  \coqlink[8eb9c83d193eda350750450f09b3dfb3]{\( \Sigma_1 \)--compression}, 
  they can be written in the form \( \exists w. \, \alpha(w, x) \) and \( \exists w. \, \beta(w, x) \), where \( \alpha, \beta \) are \( \Delta_1 \).
  Since they are disjoint, we have:
  \[
    \Nat \vDash \sforall x \, w \, v \sless \num{n}. ~ \alpha(w, x) \simp \beta(v,x) \simp \bot
  \]
  for every bound \( n \col \Nat \). 
  Due to its bounded quantification, the above formula is also \( \Delta_1 \), allowing us to use \( \Delta_1 \)--absoluteness (\autoref{Delta1_absoluteness}) to get
  \[
    \M \vDash \sforall x \, w \, v \sless \num{n}. ~ \alpha(w, x) \simp \beta(v,x) \simp \bot
  \]
  Since we are assuming \( \M > \Nat \), we have \( \M \not \cong \Nat \), which together with the stability of \(\standardElement{} \) allows us to use both Overspill (\autoref{Overspill}) and coding for predicates (\autoref{coding_model_binary}).
  Overspill gives us the potential existence of an element \( e \col \M > \Nat \) with
  \[
    \M \vDash \sforall x \, w \, v \sless e. ~ \alpha(w, x) \simp \beta(v,x) \simp \bot
  \]
  which shows the disjointness of \( \alpha, \beta \) when everything is bounded by \( e \).
  By the coding result, we can get a code \( c \col \M \) which satisfies
  \begin{align*}
    \forall u \col \Nat. ~ \M \vDash (\sexists w \sless e. \, \alpha(w, \num{u})) \siff \Pi(\num u) \mid c
  \end{align*}
  Given the above equivalence between \( D \cdef \lambda n \col \Nat. \, \M \vDash \sexists w \sless e. \, \alpha(w, \num{n}) \) and \( \Pi(\num{\, \cdot \,}) \mid c \), our initial assumption \( \neg \exists d. \, \neg \Dec{(\divNat d)} \) entails that \( D \) cannot be undecidable.
  However, we will now see that \( D \) separates the given inseparable formulas: 
  \begin{align*}
    \text{(1)} ~~~~~~~ \Q \vdash \sexists w. \, \alpha(w, \num{\, \cdot \,})
    ~& \incl~ D \\
    \text{(2)} ~~~~~~~ \Q \vdash \sexists w. \, \beta(w, \num{\, \cdot \,})
    ~& \incl~ \neg D
  \end{align*}
  which will establish its undecidability.
  \begin{enumerate}
    \item If \( \Q \vdash \sexists w. \, \alpha(w, \num{n}) \) there is \( w \col \Nat \) with \( \Nat \vDash \alpha(\num{w}, \num{n}) \) and \( \M \vDash \alpha(\num{w}, \num{n}) \) by \autoref{Delta1_absoluteness}.
    Since \( \num w < e \) we can therefore show \( D \, n \).
    \item If \( \Q \vdash \sexists w. \, \beta(w, \num{n}) \) there is \( w \col \Nat \) with \( \Nat \vDash \beta(\num{w}, \num{n}) \) and \( \M \vDash \beta(\num{w}, \num{n}) \) by \autoref{Delta1_absoluteness}.
    Assume we had \( D \, n \), then there is \( v < e \) with \( \M \vDash \alpha(v, \num n) \), which leads to a contradiction since \( \alpha, \beta \) were shown disjoint below \( e \).
  \end{enumerate}
  It leaves us with the contradiction that \( D \) is both potentially decidable and undecidable.
\end{proof}

\subsection{Variants of the Theorem}\label{section_Variants}

\setCoqFilename{FOL.Tennenbaum.Variants}
We now investigate two further variants of the theorem, going back to McCarty~\cite{mccarty1987variations, McCarty} and Makholm~\cite{SEmakholm} respectively.
They make use of a stronger notion of inseparable formulas, which requires the formulas to be provably disjoint on the object level.
\setCoqFilename{FOL.Tennenbaum.Variants}
\begin{definition}[\HA-inseparable][def_HA_Insep]\label{def_HA_inseparable}
  A pair of unary formulas \( \alpha(x), \beta(x) \) is called {\emph{\HA-inseparable}} if they are disjoint in the sense of \( \HA \vdash \sneg \sexists x. \, \alpha(x) \sand \beta(x) \) and if any \(D\) with \( \Q \vdash \alpha(\num{\, \cdot \,}) \incl D \), \( \Q \vdash \beta(\num{\, \cdot \,}) \incl \neg D \) is undecidable.
\end{definition}
According to McCarty~\cite{McCarty}, the existence of \HA-inseparable formulas can be established by taking the construction of inseparable formulas as seen in \autoref{lemma_inseparable_predicates}, and internalizing the given proof within \HA{}\@.
However, as pointed out in~\cite{peters2022goedel} (Fact 6.1), \emph{Rosser's trick}~\cite{rosser1937trick} can be used to construct the desired \HA-inseparable formulas from the inseparable formulas given by \autoref{inseparable_formulas}. 
\setCoqFilename{FOL.Tennenbaum.HA_insep}
\begin{definition}[Rosser Formula][rosser]
  Given any binary formulas \( \alpha(t, x) \) and \( \beta(t ,x) \) we define
  \begin{align*}
    (\alpha < \beta) (x) \cdef \sexists t. \, \alpha(t, x) \sand \sforall v. \, \beta(v, x) \to t < v
  \end{align*}
\end{definition}
Intuitively, if we interpret \( \exists t. \, \alpha(t, x) \) as ``\textit{There is some time \(t\) at which \( \alpha(t, x) \) can be verified}'', then \( (\alpha < \beta)(x) \) expresses that \( \alpha(\, \cdot \,, x) \) will be verified before \( \beta(\, \cdot \,, x) \).

\begin{lemma}[Disjointness of Rosser Formulas][rosser_disj]\label{appendix_rosser_disjoint}
  \( \HA \vdash \forall x. \, (\alpha < \beta)(x) \to (\beta < \alpha)(x) \to \bot \)
\end{lemma}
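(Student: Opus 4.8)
The plan is to unfold both Rosser formulas, extract their witnesses, and play the two witnesses against each other to obtain a cycle $t \sless s \sless t$ in the strict order, which \HA{} refutes. All reasoning here is purely first-order and intuitionistic, so no appeal to classical principles or \MP{} is needed; the whole argument takes place inside \HA{}'s natural deduction.

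Concretely, I would first introduce $x$ and assume $(\alpha < \beta)(x)$ together with $(\beta < \alpha)(x)$, aiming to derive $\bot$. Eliminating the leading existentials gives a witness $t$ with $\alpha(t,x)$ and $\sforall v.\, \beta(v,x) \simp t \sless v$, together with a witness $s$ with $\beta(s,x)$ and $\sforall u.\, \alpha(u,x) \simp s \sless u$ (the latter obtained by reading the definition of the Rosser formula with the roles of $\alpha$ and $\beta$ swapped). Now I instantiate: applying the universal part of $(\alpha < \beta)(x)$ at $v := s$ and discharging it with $\beta(s,x)$ yields $t \sless s$; symmetrically, applying the universal part of $(\beta < \alpha)(x)$ at $u := t$ and discharging with $\alpha(t,x)$ yields $s \sless t$.

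It then remains to derive $\bot$ from $t \sless s$ and $s \sless t$. This is the one genuinely arithmetical step. I would invoke the standard \HA{}-provable fact that the strict order admits no two-cycle, namely $\HA \vdash \sforall t\, s.\, t \sless s \simp s \sless t \simp \bot$. Unfolding $\sless$ via $t \sless s \cdef \sexists k.\, \natS(t + k) \sequal s$, the two hypotheses supply $k, l$ with $\natS(t+k) \sequal s$ and $\natS(s+l) \sequal t$; substituting the first into the second and normalizing with the recursion equation $(\natS x) + y \sequal \natS(x+y)$ and associativity of $+$ (all part of the finite core of \HA{}) yields an equation of the form $t \sequal \natS(\natS(t + m))$, which contradicts the \HA{}-provable irreflexivity fact $\sforall t\, m.\, \natS(\natS(t+m)) \sequal t \simp \bot$, itself established by induction on $t$ using disjointness and injectivity of the successor.

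The main obstacle is therefore not the logical skeleton, which is a short, mechanical sequence of $\exists$-elimination and $\forall$-instantiation, but the discharge of this order lemma inside \HA{}. In the mechanization it reduces to invoking the already-developed facts about $\sless$: transitivity plus irreflexivity suffice, since transitivity turns $t \sless s$ and $s \sless t$ into $t \sless t$, which irreflexivity then refutes. As these basic order properties are part of the arithmetic toolkit already used for Euclid's Lemma (\autoref{lemma_Euclid}), the proof is routine once they are in place.
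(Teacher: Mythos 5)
Your proof is correct and takes essentially the same route as the paper's: unfold both Rosser formulas, extract the two witnesses, cross-instantiate the universal parts to obtain $t \sless s$ and $s \sless t$, and conclude by the \HA{}-provable asymmetry of $\sless$. The only difference is that you spell out the arithmetic discharge of the order lemma, which the paper leaves as a one-line "clearly"; that elaboration is consistent with what the mechanization must do.
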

\begin{proof}
  Our goal is to prove \( (\alpha < \beta)(x), \, (\beta < \alpha)(x), \, \HA \vdash \bot \). From the formulas in the context of the derivation, we can conclude that there are terms \(t, t'\) such that
  \begin{align*}
    \alpha(t, x) ~\sand~ &\sforall v. \, \beta(v, x) \to t < v \\
    \beta(t', x) ~\sand~ &\sforall v. \, \alpha(v, x) \to t' < v
  \end{align*}
  From this we get \( t' < t \sand t < t' \) and then clearly \( \HA \vdash t < t' \to t' < t \to \bot \).
\end{proof}

\needspace{4\baselineskip}
\setCoqFilename{FOL.Tennenbaum.Variants}
\begin{lemma}[\HA-inseparable formulas][HA_Insep]\label{lemma_HA_inseparable}
  There are unary \HA-inseparable formulas.
\end{lemma}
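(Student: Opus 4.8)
The plan is to take the Rosser formulas built from the inseparable pair of Corollary~\ref{inseparable_formulas} and verify that they satisfy both clauses of Definition~\ref{def_HA_inseparable}. Concretely, let \( \alpha', \beta' \) be the inseparable unary \( \Sigma_1 \)--formulas from Corollary~\ref{inseparable_formulas}, so that \( A \cdef \lambda n. \, \Q \vdash \alpha'(\num n) \) and \( B \cdef \lambda n. \, \Q \vdash \beta'(\num n) \) are inseparable. By \( \Sigma_1 \)--compression I would write \( \alpha' = \sexists w. \, \alpha(w, x) \) and \( \beta' = \sexists w. \, \beta(w, x) \) with \( \alpha, \beta \) binary and \( \Delta_1 \), and take the candidate HA-inseparable pair to be the Rosser formulas \( (\alpha < \beta) \) and \( (\beta < \alpha) \).

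The first clause, object-level disjointness \( \HA \vdash \sneg \sexists x. \, (\alpha<\beta)(x) \sand (\beta<\alpha)(x) \), is immediate from Lemma~\ref{appendix_rosser_disjoint}, which already provides \( \HA \vdash \sforall x. \, (\alpha<\beta)(x) \to (\beta<\alpha)(x) \to \bot \). For the second clause I would reduce undecidability of a separator of \( (\alpha<\beta), (\beta<\alpha) \) back to the inseparability of \( A, B \). The two lemmas to establish are the implications \( A\,n \to \Q \vdash (\alpha<\beta)(\num n) \) and \( B\,n \to \Q \vdash (\beta<\alpha)(\num n) \). Granting these, suppose \( D \) satisfies \( \Q \vdash (\alpha<\beta)(\num{\, \cdot \,}) \incl D \) and \( \Q \vdash (\beta<\alpha)(\num{\, \cdot \,}) \incl \neg D \); then the two implications yield \( A \incl D \) and \( B \incl \neg D \), so the inseparability of \( A, B \) forces \( D \) to be undecidable, which is exactly what Definition~\ref{def_HA_inseparable} demands.

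The main obstacle is proving the two Rosser implications, say \( A\,n \to \Q \vdash (\alpha<\beta)(\num n) \). Given \( A\,n \), i.e.\ \( \Q \vdash \sexists w. \, \alpha(w, \num n) \), soundness against the standard model (where \( \Nat \vDash \Q \)) gives a standard witness \( w \) with \( \Nat \vDash \alpha(\num w, \num n) \); since \( \alpha \) is \( \Delta_1 \) and \( \Q \vdash \sneg \alpha(\num w, \num n) \) would contradict soundness, this lifts to \( \Q \vdash \alpha(\num w, \num n) \). Because \( A, B \) are disjoint we have \( \neg B\,n \), so \( \beta \) admits no standard witness at \( \num n \); by \( \Delta_1 \)--decidability (Definition~\ref{def_DeltaForm}) this yields \( \Q \vdash \sneg \beta(\num v, \num n) \) for every standard \( v \), in particular for all \( v \le w \).

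The delicate step is turning this finite family of refutations into the single provable bound \( \Q \vdash \sforall v. \, \beta(v, \num n) \to \num w < v \). This requires \(\Q\)'s standard capacity to prove the bounded case split \( \sforall v. \, v \le \num w \to \bigvee_{i \le w} v = \num i \) together with order trichotomy \( \sforall v. \, v \le \num w \sor \num w < v \); combining these with the refutations below \( w \) discharges the premise \( \beta(v,\num n) \) whenever \( v \le \num w \) and makes the conclusion trivial otherwise. With \( \Q \vdash \alpha(\num w, \num n) \) in hand, the witness \( t = \num w \) then establishes \( \Q \vdash (\alpha<\beta)(\num n) \), and the symmetric argument handles \( B \). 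I expect this \(\Q\)-level bounding of the Rosser witness to be the crux, since it is precisely where the purely semantic disjointness of \( A \) and \( B \) must be converted into object-level derivability.
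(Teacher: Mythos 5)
Your proposal is correct and shares the paper's overall architecture: take the Rosser formulas built from the $\Sigma_1$--compressed inseparable pair, get object-level disjointness from Lemma~\ref{appendix_rosser_disjoint}, and reduce the undecidability clause to the inseparability of $A,B$ via the two implications $\Q \vdash \alpha'(\num n) \Rightarrow \Q \vdash (\alpha<\beta)(\num n)$ and its mirror image. Where you genuinely diverge is in the proof of these implications, which you correctly identify as the crux. The paper first observes that $\sforall v. \, \beta(v,x) \to w < v$ is equivalent to the bounded, hence $\Delta_1$, formula $\sforall v \leq w. \, \sneg\beta(v,x)$, so that the entire Rosser formula is itself $\Sigma_1$; it then merely verifies $\Nat \vDash (\alpha<\beta)(\num n)$ semantically (a standard witness for $\alpha$ exists, none for $\beta$ by disjointness) and obtains derivability in one stroke from $\Sigma_1$--completeness (Fact~\ref{Sigma1_completeness}). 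You instead construct the derivation $\Q \vdash \sforall v. \, \beta(v,\num n) \to \num w < v$ by hand from the finitely many refutations $\Q \vdash \sneg\beta(\num v,\num n)$ for $v \leq w$, using $\Q$-provable order trichotomy and the bounded case split on $v \leq \num w$. This works---those order facts are indeed provable in $\Q$ for each numeral---but it re-proves, for this one instance, precisely the finitary object-level combinatorics that the $\Sigma_1$--completeness fact already packages, which in a mechanization amounts to a substantial extra derivation effort. The one idea your route does without, but which enables the paper's shortcut, is the observation that the Rosser formula is $\Sigma_1$.
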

\begin{proof}
  By \autoref{inseparable_formulas}, there are inseparable \( \Sigma_1 \)--formulas, and due to 
  \setCoqFilename{FOL.Incompleteness.sigma1}
  \coqlink[8eb9c83d193eda350750450f09b3dfb3]{\( \Sigma_1 \)--compression}, 
  we can assume they have the form \( \alpha'(x) = \sexists w. \, \alpha(w, x)  \), \( \beta'(x) = \sexists w. \beta(w, x) \), where \( \alpha, \beta \) are both \( \Delta_1 \).  
  Since we have the equivalence
  \begin{align*}
    \forall v. \, \beta(v, x) \to w < v
    ~\iff~
    \forall v \leq w. \neg \beta(v, x)
  \end{align*}
  where the latter formula is \( \Delta_1 \), we can conclude that the Rosser formula \( (\alpha < \beta)(x) \) is \( \Sigma_1 \).

  We will now show that 
  \setCoqFilename{FOL.Tennenbaum.HA_insep}
  \coqlink[rosser_inherit]{\( \Q \vdash \alpha'(\num n) \) implies \( \Q \vdash (\alpha < \beta)(\num n) \)}. From the assumption and soundness we get \( w \col \Nat \) such that \( \Nat \vDash \alpha(\num w, \num n) \).
  Since \( \alpha', \beta' \) are inseparable and therefore disjoint, we must have \( \Nat \vDash \neg \beta(\num v, \num n) \) for every \( v \col \Nat \) and therefore in particular \( \Nat \vDash \forall v. \, \beta(v, \num n) \to \num w < v \). 
  Overall this shows \(\Nat \vDash (\alpha < \beta)(\num n) \), which by \( \Sigma_1 \)--completeness gives \( \Q \vdash (\alpha < \beta)(\num n) \).
  With the exact same reasoning we can get the analogous result for \( \beta < \alpha \).

  We can now move on to prove that \( \alpha < \beta \) and \( \beta < \alpha \) constitute a pair of \HA-inseparable formulas. The disjointness property follows from the previous \autoref{appendix_rosser_disjoint}. Abbreviating any predicate \( \lambda n \col \Nat. \, \Q \vdash \varphi(\num n) \) by \( \Q \vdash \varphi \),
  it remains to show that any \( D \) satisfying 
  \begin{align*}
    \Q \vdash (\alpha < \beta) 
    ~&\incl~ D \\
    \Q \vdash (\beta < \alpha) 
    ~&\incl~
    \neg D
  \end{align*}
  is undecidable. By the implications we have just shown above, we get the inclusions
  \begin{align*}
    \Q \vdash \alpha'  
    ~\incl~
    \Q \vdash (\alpha < \beta) 
    ~&\incl~
    D \\
    \Q \vdash \beta'  
    ~\incl~
    \Q \vdash (\beta < \alpha) 
    ~&\incl~
    \neg D
  \end{align*}
  The undecidability of \( D \) therefore follows from the inseparability of the pair \( \alpha', \beta' \).
\end{proof}
This stronger disjointness property allows us to easily establish that satisfiability of \HA{}-inseparable formulas is undecidable in any model.
\begin{lemma}[ ][HA_Insep_undec]\label{lemma_HA_insep_undecidable}
  {If \( \alpha, \beta \) are \HA-inseparable, then \( \satformula{\M}{\alpha} \) and \( \satformula{\M}{\beta} \) are undecidable.}
\end{lemma}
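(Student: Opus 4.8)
The plan is to reduce undecidability of each predicate to the undecidability clause of \HA-inseparability (\autoref{def_HA_inseparable}) by exhibiting the model-satisfaction predicates themselves, or their complements, as separators of the pair \( \alpha, \beta \). Note first that no further hypotheses on \( \M \) are needed: the whole point of the stronger object-level disjointness is that it lets us bypass Overspill and stability entirely. Throughout I would use two transfer facts about the fixed model. Since \( \M \vDash \HA \) and every model of \HA{} is a model of \Q{} (the extra \Q-axiom \( \sforall x. \, x = \szero \sor \sexists y. \, x = \natS y \) is provable by induction), soundness yields \( \Q \vdash \psi \implies \M \vDash \psi \) for closed \( \psi \), exactly the transfer already used in the proof of \autoref{decidable_coding}. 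Moreover, the disjointness \( \HA \vdash \sneg \sexists x. \, \alpha(x) \sand \beta(x) \) transfers to \( \M \), so by Tarski semantics there is no \( e \col \M \) with both \( \M \vDash \alpha(e) \) and \( \M \vDash \beta(e) \); in particular \( \neg ( \M \vDash \alpha(\num n) \land \M \vDash \beta(\num n)) \) for every \( n \col \Nat \).

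For \( \satformula{\M}{\alpha} \) I would take the separator \( D \cdef \satformula{\M}{\alpha} \) directly. If \( \Q \vdash \alpha(\num n) \) then \( \M \vDash \alpha(\num n) \), i.e. \( D\,n \), establishing \( \Q \vdash \alpha(\num{\, \cdot \,}) \incl D \). If \( \Q \vdash \beta(\num n) \) then \( \M \vDash \beta(\num n) \), and the transferred disjointness forces \( \neg \, \M \vDash \alpha(\num n) \), i.e. \( \neg D\,n \), establishing \( \Q \vdash \beta(\num{\, \cdot \,}) \incl \neg D \). Since \( D \) separates the \HA-inseparable pair, the inseparability clause makes it undecidable, which is precisely \( \neg \Dec{(\satformula{\M}{\alpha})} \).

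The \( \beta \)-case carries the only genuine subtlety, and I expect it to be the main obstacle. One cannot symmetrically take \( D \cdef \satformula{\M}{\beta} \), because \( \Q \vdash \beta(\num n) \) would place \( n \) into \( D \) rather than into \( \neg D \), so this \( D \) is not a separator for the ordered pair \( (\alpha, \beta) \). Instead I would use the complement \( E \cdef \fun n. \, \neg \, \M \vDash \beta(\num n) \). Here \( \Q \vdash \alpha(\num{\, \cdot \,}) \incl E \) follows from \( \M \vDash \alpha(\num n) \) together with the transferred disjointness, and \( \Q \vdash \beta(\num{\, \cdot \,}) \incl \neg E \) holds because \( \M \vDash \beta(\num n) \) implies \( \neg \neg \, \M \vDash \beta(\num n) = \neg E\, n \). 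Thus \( E \) is undecidable. Finally, a decider for \( \satformula{\M}{\beta} \) would yield a decider for its boolean complement \( E \) by negating the boolean output, contradicting the undecidability of \( E \); this transfer of decidability along complementation is available constructively and is exactly what dissolves the apparent asymmetry between the two predicates. Hence \( \satformula{\M}{\beta} \) is undecidable as well, completing both cases.
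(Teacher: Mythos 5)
Your proposal is correct and follows essentially the same route as the paper's proof: transfer \( \Q \)-provability to satisfaction in \( \M \) via soundness, transfer the object-level disjointness to \( \M \), and conclude that \( \satformula{\M}{\alpha} \) is a separator and hence undecidable. Your explicit treatment of the \( \beta \)-case via the complement \( E \cdef \fun n. \, \neg\, \M \vDash \beta(\num n) \) (and the constructive passage from a decider for a predicate to one for its complement) correctly fills in what the paper compresses into the phrase ``the same argument''.
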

\begin{proof}
  Using soundness and the \HA{}-disjointness of \( \alpha \) and \( \beta \),  we get
  \begin{align*}
    \Q \vdash \alpha(\num{n})
    ~&\stackrel{\text{sound}}{\Longrightarrow}~
    \M \vDash \alpha(\num{n}) \\
    \Q \vdash \beta( \num{n} )
    ~&\stackrel{\text{sound}}{\Longrightarrow}~
    \M \vDash \beta( \num{n} )
    ~\stackrel{\HA \text{-disj.}}{\Longrightarrow}~
    \neg \M \vDash \alpha( \num{n} )
  \end{align*}
  Undecidability of \( \satformula{\M}{\alpha}\) then follows from inseparability of the given formulas (\autoref{inseparable_formulas}), and the same argument also shows the undecidability of \( \satformula{\M}{\beta}\).
\end{proof}

\setCoqFilename{FOL.Tennenbaum.Variants}
McCarty~\cite{mccarty1987variations, McCarty} considers Tennenbaum's theorem with constructive semantics.
Instead of models placed in classical set theory, he works in an intuitionistic theory (e.g. \IZF), making the interpretation of the object-level disjunction much stronger.
By furthermore assuming \MP{}, he is then able to show that all models of \HA{} in this constructive setting are standard.
To achieve the constructive rendering of disjunctions, we will locally make use of the following choice principle:
\setCoqFilename{FOL.Tennenbaum.SyntheticInType}
\begin{definition}[ ][UC]\label{Unique_Choice}
  By \( \UC \) we denote the principle of unique choice:
  \[
    \forall X \, Y \, (R \col X \to Y \to \Prop). \, (\forall x \, \exists ! y. \, R x y) \to \exists f \col X \to Y. \, \forall x. \, R x (f x)
  \]
\end{definition}
Note that generally, Church's thesis and unique choice principles combined prove the negation of \LEM{} (\cf~\cite{forster:LIPIcs:2021:13455}), which will make the results that use \UC{} (deliberately) anti-classical.
\setCoqFilename{FOL.Tennenbaum.Variants}
\begin{lemma}[ ][UC_unary_DN_Dec]\label{lemma_McCarty}
  If \UC{} and \( \M > \Nat \), then there is no unary formula \( \alpha \) such that \( \satformula{\M}{\alpha} \) is undecidable.
\end{lemma}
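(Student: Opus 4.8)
The goal unfolds to $\forall \alpha. \, \neg\neg \Dec{(\fun n \col \Nat. \, \M \vDash \alpha(\num n))}$, a negated statement, so by \autoref{DN_handling} I may freely strip double negations from the intermediate facts I produce. The plan is to reduce satisfaction of the arbitrary formula $\alpha$ to divisibility of a single code $c \col \M$ by standard primes, and then to use $\UC$ to make that divisibility decidable.

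First I would obtain the code. Since $\M > \Nat$ there is a fixed non-standard element $a$, and in particular $\M \not\cong \Nat$. The coding machinery of Section~\ref{section_coding} then yields, potentially, a $c \col \M$ with $\forall n \col \Nat. \, \M \vDash \alpha(\num n) \iff \M \vDash \num{\primeFunc n} \mid c$, which is essentially the content of \autoref{coding_model_unary} (using that $\Pi$ represents $\primeFunc{}$). The stability hypothesis of that lemma can be sidestepped here by coding $\alpha$ \emph{below the given} $a$: the internal analogue of \autoref{coding_nat}, namely $\M \vDash \sforall b. \, \neg \neg \, \sexists c. \, \sforall u \sless b. \, \alpha(u) \siff \Pi(u) \mid c$, is provable by induction in the model, and instantiating it at $b \cdef a$ covers every standard $\num n$ since $\num n < a$. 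Because the overall goal is already under a $\neg\neg$, the double negation guarding the existence of $c$ is harmless, and I may assume an actual $c$.

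The heart of the argument is to show that $\fun n \col \Nat. \, \M \vDash \num{\primeFunc n} \mid c$ is decidable. Here $\UC$ enters. By Euclid's \autoref{lemma_Euclid} the model satisfies $\sexists r \, q. \, c \sequal q \cdot \num{\primeFunc n} + r \sand r \sless \num{\primeFunc n}$, and this quotient–remainder pair is unique, since $\num{\primeFunc n}$ is positive; applying $\UC$ to this unique existence (as a relation indexed by $n$ and $c$) produces an honest function returning the remainder $r \col \M$. Now $\M \vDash r \sless \num{\primeFunc n}$ forces $r$ to be standard, and since $\HA \vdash \sforall x. \, x \sless \num k \simp \bigvee_{m < k} x \sequal \num m$ holds for each standard $k$, the model satisfies the corresponding disjunction at $x \cdef r$. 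By the \emph{constructive} reading of the object-level $\sor$, this finite disjunction lets me read off the concrete $m < \primeFunc n$ with $r = \Inum m$. Finally, by the uniqueness part of \autoref{lemma_Euclid}, $\M \vDash \num{\primeFunc n} \mid c$ holds iff $r = 0$, i.e.\ iff $m = 0$ (using injectivity of $\Inum{}$), which is a decidable test on $m$. This yields a genuine decider $\Nat \to \Bool$.

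Composing this decider with the coding equivalence decides $\fun n. \, \M \vDash \alpha(\num n)$, contradicting the assumed undecidability and discharging the $\neg\neg$ goal. The main obstacle is exactly this decidability of standard-prime divisibility: it is the single point where $\UC$ is indispensable — to convert the model's propositional, domain-level existence of a remainder into a computable function — and where the strengthened constructive semantics is essential — to extract the concrete value of the remainder from the finite object-level disjunction and thereby decide whether it vanishes. The remaining work (managing the double negations and avoiding the stability-of-$\standardElement{}$ hypothesis by exploiting the given non-standard $a$) is routine by \autoref{DN_handling}.
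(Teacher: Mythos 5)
Your route is genuinely different from the paper's, and it has two concrete problems. First, the coding step you lean on --- the internal statement \( \M \vDash \sforall b. \, \sneg\sneg\sexists c \, \sforall u \sless b. \, \alpha(u) \siff \Pi(u) \mid c \), instantiated at the non-standard \( a \) --- is exactly \autoref{hypothesis_object_level_coding} (\HA-coding). The paper does \emph{not} prove this; it is flagged as an unmechanized hypothesis whose derivation inside \HA{} (or semantically in an arbitrary \( \M \)) requires developing prime-factorization facts on the object level and is deferred as substantial future work. Asserting that it ``is provable by induction in the model'' papers over precisely the hard content, and makes your proof of \autoref{lemma_McCarty} depend on an assumption the paper's own proof of this lemma does not need. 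Second, the extraction of ``the concrete \( m < \primeFunc{n} \) with \( r = \Inum{m} \)'' from the object-level disjunction \( \bigvee_{m<k} x \sequal \num m \) is not licensed by the paper's semantics: Tarski satisfaction interprets \( \sor \) as the propositional \( \lor \) of the meta-theory, and the large-elimination restriction forbids computing data from a \( \Prop \)-level disjunction. There is no ``strengthened constructive semantics'' in play (the paper explicitly lists a proof-relevant semantics as future work). This particular step is repairable --- the assignment \( n \mapsto m \) is total and unique (by injectivity of \( \Inum{} \)), so a second application of \UC{} yields the function --- but as written the justification is wrong.

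More importantly, the whole detour through coding and Euclidean division is unnecessary, and the paper's proof is far shorter: by internal induction one shows \( \M \vDash \sforall n. \, \sneg\sneg\sforall x \sless n. \, \alpha(x) \sor \sneg \alpha(x) \); instantiating the bound at the non-standard element gives \( \neg\neg\,\forall n \col \Nat. \, \M \vDash \alpha(\num n) \sor \sneg\alpha(\num n) \), i.e.\ \( \satformula{\M}{\alpha} \) is potentially definite; and \UC{} turns a definite predicate on \Nat{} into a decidable one, yielding \( \neg\neg\Dec{(\satformula{\M}{\alpha})} \) directly. The single role of the non-standard element there is to serve as a bound that dominates every numeral --- no code, no divisibility, no stability of \( \standardElement{} \). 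I would encourage you to look for that argument: the ingredient you correctly identified (\UC{} bridges from propositional existence to functions) is applied once, to definiteness itself, rather than to remainders of a code you first have to construct.
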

\begin{proof}
  Since single instances of the law of excluded middle are provable under double negation, induction on \( n \) can be used to prove \( \M \vDash \sforall n. \, \sneg \sneg \sforall x < n. \, \alpha(x) \sor \sneg \alpha(x) \).
  Choosing \(e \col \M > \Nat \) for the bound \(n\) above, we get \( \M \vDash \neg \neg \sforall x < e. \, \alpha(x) \sor \sneg \alpha(x) \) and therefore in particular \( \neg \neg \, \forall n \col \Nat. \, \M \vDash \alpha(\num n) \sor \sneg \alpha(\num n) \), meaning \( \satformula{\M}{\alpha} \) is potentially definite.
  \setCoqFilename{FOL.Tennenbaum.SyntheticInType}
  Since \coqlink[UC_Def_Dec]{\UC{} entails that definite predicates on \Nat{} are decidable}, 
  \setCoqFilename{FOL.Tennenbaum.Variants}
  we get \( \neg \neg \Dec{(\satformula{\M}{\alpha})} \) for all formulas \( \alpha \), which implies the non-existence claim.
\end{proof}
\begin{corollary}[ ][UC_no_nonStd]\label{corollary_auc_nonStd}
  If \UC{}, then \( \neg \M > \Nat \).
\end{corollary}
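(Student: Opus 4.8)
The plan is to derive a contradiction from the hypothesis \( \M > \Nat \) by playing the just-proved non-existence result (\autoref{lemma_McCarty}) against the existence of \HA-inseparable formulas whose satisfaction is provably undecidable in any model.

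First I would assume \UC{} and, aiming for a contradiction, suppose \( \M > \Nat \). By \autoref{lemma_HA_inseparable} there is a pair \( \alpha, \beta \) of unary \HA-inseparable formulas. Feeding this pair into \autoref{lemma_HA_insep_undecidable} immediately yields that \( \satformula{\M}{\alpha} \) is undecidable, i.e.\ \( \neg \Dec{(\satformula{\M}{\alpha})} \); note that this step needs nothing about \( \M \) beyond its being a \HA-model, so in particular it is available under the standing assumption \( \M > \Nat \).

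On the other hand, \autoref{lemma_McCarty} applies precisely because we have assumed \UC{} together with \( \M > \Nat \), and it states that under these hypotheses \emph{no} unary formula \( \alpha \) can have \( \satformula{\M}{\alpha} \) undecidable. Instantiating this non-existence statement with the formula \( \alpha \) produced above contradicts the undecidability we just established. This contradiction discharges the assumption \( \M > \Nat \) and gives \( \neg \M > \Nat \) as claimed.

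The proof is essentially a one-line composition of the two preceding lemmas, so there is no real obstacle; the only point requiring care is bookkeeping. Both lemmas are phrased in terms of decidability of the same satisfaction predicate \( \satformula{\M}{\,\cdot\,} \), so the existence witness \( \neg \Dec{(\satformula{\M}{\alpha})} \) coming from \autoref{lemma_HA_insep_undecidable} and the non-existence statement \( \neg \exists \alpha.\, \neg \Dec{(\satformula{\M}{\alpha})} \) coming from \autoref{lemma_McCarty} are in literal conflict, and no reconciliation of differing notions of undecidability is needed.
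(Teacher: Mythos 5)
Your proof is correct and follows exactly the paper's argument: obtain \HA-inseparable formulas from \autoref{lemma_HA_inseparable}, conclude undecidability of their satisfaction via \autoref{lemma_HA_insep_undecidable}, and contradict \autoref{lemma_McCarty} under the assumptions \UC{} and \( \M > \Nat \). No differences worth noting.
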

\begin{proof}
  By \autoref{lemma_HA_inseparable} there exist \HA{}-inseparable formulas, and by \autoref{lemma_HA_insep_undecidable} their satisfiability is undecidable.
  This directly contradicts \autoref{lemma_McCarty}.
\end{proof}
\begin{theorem}[McCarty][McCarty]\label{McCarty}
  Given \UC{} and \MP{}, every model of \HA{} is standard.\footnote{Or another way to put it: \HA{} is \emph{categorical}.}
\end{theorem}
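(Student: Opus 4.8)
The plan is to assemble the theorem from the corollary and lemmas already established, the only genuinely new ingredient being that every $\HA$-model is discrete. Fix a model $\M \vDash \HA$; the goal is $\M \cong \Nat$. By \autoref{std_model_equiv_definitions} this is the same as $\forall e \col \M.\, \standardElement{\, e}$, and by \autoref{MP_std_stable}(2) it suffices to prove $\neg \M > \Nat$ together with stability of $\standardElement{}$. The first half is immediate: assuming $\UC$, \autoref{corollary_auc_nonStd} directly yields $\neg \M > \Nat$. So the real work is to obtain stability of $\standardElement{}$, and by \autoref{MP_std_stable}(3)---which we may invoke since we assume $\MP$---this reduces to showing that $\M$ is discrete.

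To see that $\M$ is discrete, I would use that $\HA$ proves the decidability of its own equality, i.e. $\HA \vdash \sforall x\, y.\, x \sequal y \sor \sneg(x \sequal y)$; this is the standard induction on $x$ with a nested case analysis on $y$, using the disjointness and injectivity axioms for the successor together with $\sforall x.\, x \sequal \szero \sor \sexists y.\, x \sequal \natS y$. Soundness then transports this to $\M$, and because Tarski semantics interprets the object-level $\sor$, $\sneg$, and---in an extensional model---$\sequal$ by the corresponding meta-level connectives and actual equality, we obtain pointwise definiteness of equality on the domain: $\forall a\, b \col \M.\, a = b \lor a \neq b$. Finally, $\UC$ upgrades this pointwise definiteness to an actual $\Bool$-valued decider, exactly as $\UC$ turns definite predicates on $\Nat$ into decidable ones in the proof of \autoref{lemma_McCarty}, so $\M$ is discrete in the sense of \autoref{type_properties}.

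Putting the pieces together: discreteness of $\M$ and $\MP$ give stability of $\standardElement{}$ via \autoref{MP_std_stable}(3); $\UC$ gives $\neg \M > \Nat$ via \autoref{corollary_auc_nonStd}; and stability together with $\neg \M > \Nat$ give $\M \cong \Nat$ via \autoref{MP_std_stable}(2). I expect the main obstacle to be bookkeeping rather than conceptual, since \autoref{corollary_auc_nonStd} already carries the substantive anti-classical content through the $\HA$-inseparable formulas. The one point that deserves care is the discreteness argument: one must check that passing from the provable disjunction to a genuine decider really does need $\UC$, reflecting that the $\Prop$-level $\sor$ delivered by soundness cannot be eliminated into $\Bool$ without a choice principle, and that $\MP$ is then precisely what converts the merely potential standardness $\neg\neg\standardElement{\, e}$ into standardness once equality with numerals is decidable.
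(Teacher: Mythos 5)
Your proposal is correct and follows essentially the same route as the paper's proof: $\neg\,\M > \Nat$ from \autoref{corollary_auc_nonStd} via \UC{}, discreteness of $\M$ from the $\HA$-provable decidability of equality upgraded by \UC{}, and then \MP{} with \autoref{MP_std_stable} to conclude $\M \cong \Nat$. The extra detail you supply on the induction establishing $\HA \vdash \sforall x\, y.\, x \sequal y \sor \sneg(x \sequal y)$ and on why \UC{} is needed to pass from the $\Prop$-level disjunction to a decider is accurate and consistent with the paper's (more terse) argument.
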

\begin{proof}
  By \autoref{corollary_auc_nonStd} we have \( \neg \M > \Nat \).
  Since \( \HA \vdash \forall x y. \, x \sequal y \sor \sneg \, x \sequal y \), it follows from \( \UC \) that every model \( \M \vDash \HA \) is discrete.
  Since we also assume \MP{}, \autoref{MP_std_stable} \( (2) \) now entails \( \M \cong \Nat \).
\end{proof}

Now turning to Makholm~\cite{SEmakholm}, we will no longer require \UC{}, but will instead make use of the fact that the coding lemma can be established in \HA{}.
While we have proven this result on the level of Coq in \autoref{coding_nat}, we did not mechanize its derivation in the first-order theory \HA{}.
To make this transparent, we make its assumption explicit here.
\begin{coqhyp}[\HA-Coding][obj_Coding]\label{hypothesis_object_level_coding}
  {For any unary formula \( \alpha(x) \), \HA{} can internally prove} the coding lemma:
  \(
    \HA \vdash \sforall n \, \neg \neg \sexists c \, \sforall u \sless n. ~
    \alpha(u) \siff \Pi (u) \mid c
  \).\footnote{
      In the conference paper~\cite{hermes2022tennenbaum}, Hypothesis 2 stated that \( \HA \vdash \sforall n \, \sexists c \, \sforall u \sless n. ~
      \alpha(u) \siff \Pi (u) \mid c \). 
      Contrary to what was claimed, this is not provable for arbitrary formulas \( \alpha \).
      The new formulation of \HA{}-coding solves this mistake through the addition of the double negation.
    }
\end{coqhyp}

\begin{theorem}[Makholm][Makholm]\label{Makholm}
  If \( \M > \Nat \) then \( \neg \neg \exists d \col \M. \, \neg \Dec{(\divNat d)} \).
\end{theorem}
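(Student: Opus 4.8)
The plan is to mirror the proof of \autoref{tennenbaum-inseparable}, but to replace its two appeals to Overspill---one transporting the disjointness of the inseparable formulas past a non-standard bound, the other hidden inside the coding lemma \autoref{coding_model_binary}---by their object-level counterparts. This is exactly what makes the stability of $\standardElement{}$ (and hence Markov's Principle) dispensable. Since the goal is doubly negated, \autoref{DN_handling} lets me assume $\neg\exists d\col\M.\,\neg\Dec{(\divNat d)}$ and reduce to deriving $\bot$; as the running goal is now $\bot$, I may freely strip double negations from any hypothesis I produce.

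First, $\M > \Nat$ hands me an actual non-standard witness $e\col\M$, and by \autoref{Std_lt_nonStd} every numeral satisfies $\num{u} < e$. By \autoref{lemma_HA_inseparable} I fix a unary $\HA$-inseparable pair $\alpha,\beta$ and carry the argument through for $\alpha$. Applying \autoref{hypothesis_object_level_coding} to $\alpha$ and using soundness yields $\M \vDash \sforall n\,\neg\neg\sexists c\,\sforall u \sless n.\,\alpha(u)\siff\Pi(u)\mid c$. Instantiating this model-level universal directly at the non-standard bound $e$---the crucial step that replaces Overspill---gives the potential existence in $\M$ of a code $c$ with $\M \vDash \sforall u \sless e.\,\alpha(u)\siff\Pi(u)\mid c$; stripping the double negation produces such a $c$ concretely. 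Because $\num{u}<e$ for every $u\col\Nat$, this specializes to $\M \vDash \alpha(\num{u}) \iff \M \vDash \Pi(\num{u})\mid c$ for all standard $u$, and since $\Pi$ represents $\primeFunc{}$ (\autoref{represent_prime_function}) the right-hand side is $\M \vDash \num{\primeFunc{u}}\mid c$.

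I then set $D \cdef \lambda n\col\Nat.\,\M\vDash\alpha(\num{n})$, which is precisely $\satformula{\M}{\alpha}$, and the equivalences above give $D\,n \iff \M\vDash \num{\primeFunc{n}}\mid c$ for all $n\col\Nat$. To contradict the assumption it suffices to show $\neg\Dec{(\divNat c)}$: any decider for $\divNat c$, precomposed with the computable function $\primeFunc{}$, would decide $D$, contradicting the undecidability of $\satformula{\M}{\alpha}$ furnished by \autoref{lemma_HA_insep_undecidable}. Hence $c$ witnesses $\exists d\col\M.\,\neg\Dec{(\divNat d)}$, contradicting the assumption and closing the goal $\bot$.

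The only conceptual subtlety is to see why Overspill and stability are genuinely unnecessary here. Disjointness of the inseparable formulas is now $\HA$-provable and is already absorbed into \autoref{lemma_HA_insep_undecidable}, so it holds throughout $\M$ without any spilling; and the coding statement of \autoref{hypothesis_object_level_coding} is quantified over \emph{all} model bounds, so the non-standard $e$---supplied outright by $\M > \Nat$ rather than reconstructed from $\M\not\cong\Nat$---can simply be substituted. The remaining double-negation bookkeeping is routine given \autoref{DN_handling}.
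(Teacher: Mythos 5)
Your proposal is correct and follows essentially the same route as the paper: fix an $\HA$-inseparable pair via \autoref{lemma_HA_inseparable}, transport \autoref{hypothesis_object_level_coding} into $\M$ by soundness, instantiate the universally quantified bound at the non-standard element supplied by $\M > \Nat$ to potentially obtain a code $c$, and conclude $\neg\Dec{(\divNat c)}$ from the undecidability of $\satformula{\M}{\alpha}$ given by \autoref{lemma_HA_insep_undecidable}. Your added remarks---the explicit double-negation bookkeeping via \autoref{DN_handling} and the observation that a decider for $\divNat c$ precomposed with $\primeFunc{}$ would decide $\satformula{\M}{\alpha}$---only spell out details the paper leaves implicit.
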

\begin{proof}
  By \autoref{lemma_HA_inseparable} there are \HA-inseparable unary formulas \( \alpha \) and \( \beta \).
  Using soundness and \HA-coding we get that \( \alpha \) can be coded up to any bound \(n \col \M \)
  \[
    \M \vDash \forall n \, \neg \neg \sexists c \, \sforall u \sless n. \,
    \alpha(u) \siff \Pi (u) \mid c.
  \]
  By assumption, there is a non-standard element \( e \col \M > \Nat \), and picking this for the bound \(n\), we potentially get a code \( c \col \M \) satisfying
  \[
    \M \vDash \sforall u \sless e. \,
    \alpha(u) \siff \Pi (u) \mid c.
  \]
  Since the above equivalence holds for all standard numbers \( u \col \M \), the undecidability of \( \M \vDash \alpha (\num{\, \cdot \,}) \) (\autoref{lemma_HA_insep_undecidable}) entails \( \neg \Dec{ (\divNat{c}) }\), which shows the existence claim.
\end{proof}
Note the quite remarkable fact that in contrast to \autoref{tennenbaum-inseparable}, we do not need to assume \MP{} or discreteness of the model in order to establish \autoref{Makholm}.

\subsection{Unearthing the Roots of Tennenbaum's Theorem}\label{section_root_of_Tennenbaum}

In this section, we want to record briefly and mostly informally some general ingredients that seem to go into a proof of Tennenbaum's theorem.

Taking a look at the proofs of both McCarty and Makholm, we can see a very clear structure.
In both of them:
\begin{itemize}
  \item \HA-inseparable formulas are used to derive the existence of an undecidable \(\satformula{\M}{\alpha}\).
  \item Assuming the model has a non-standard element, it is shown that there potentially exists an element with undecidable divisibility on numerals.
\end{itemize}
In Makholm's proof the latter point was achieved by usage of the coding result \autoref{hypothesis_object_level_coding}, which establishes a connection between satisfaction of a formula and divisibility with respect to its code number.
Close inspection of the proof of \autoref{coding_nat} reveals that only two properties of divisibility by primes are needed for it:
\begin{align*}
  \forall x. \, \neg \, \pi_x \mid 1,
  \hspace{1.2cm}
  \forall n \, c \, \exists c' \, \forall x. ~ \pi_x \mid c' \iff x = n \lor \pi_x \mid c.
\end{align*}
We can abstract away from divisibility and formulate the result as:
\setCoqFilename{FOL.Tennenbaum.Abstract_coding}
\begin{lemma}[ ][DN_coding]
  Let \( \in~ \col \Nat \to \Nat \to \Prop \) be a binary predicate satisfying the conditions
  \begin{enumerate}
    \item \( \exists e \, \forall x. \, \neg \, x \in e \) 
    \hspace{10em} (Empty set)
    \item \( \forall n \, c \, \exists c' \, \forall x. \, x \in c' \iff x = n \lor x \in c \) 
    \hspace{1.5em} (Extension),
  \end{enumerate}
  which we can interpret as axiomatizing a weak notion of sets. 
  For any predicate \( p \col \Nat \to \Prop \) we then have \( \forall n \, \neg \neg \, \exists c \, \forall u < n. \, p \, u \iff u \in c \).
\end{lemma}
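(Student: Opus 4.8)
The plan is to mirror the inductive proof of \autoref{coding_nat}, but to \emph{strengthen} the induction hypothesis so that it pins down the membership of the code \emph{exactly}, and not merely for arguments below the bound. This strengthening is forced by the abstract setting: the two axioms only let us \emph{add} an element to a code (via Extension, condition (2)), and never remove one, so the weaker ``coded below the bound'' invariant from the prime-based proof is no longer enough. Concretely, I would prove by induction on \( n \) the sharper statement
\[
  \neg \neg \, \exists c. \, \forall u. ~ u \in c \iff (u < n \land p \, u),
\]
from which the claimed conclusion is immediate, since restricting to \( u < n \) collapses \( u < n \land p\,u \) into \( p\,u \).

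For the base case \( n = 0 \), the Empty-set condition (1) supplies an \( e \) with \( \forall x. \, \neg\, x \in e \); as \( u < 0 \) is false, both sides of the equivalence are false for every \( u \), so \( e \) witnesses the claim outright, even without the double negation. For the step \( n \to S\,n \), I would appeal to \autoref{DN_handling}: since the goal has the form \( \neg \neg (\dots) \), it suffices to work with an \emph{actual} code \( c \) satisfying the induction hypothesis together with an \emph{actual} decision \( p\,n \lor \neg\, p\,n \), the latter being available because \( \neg\neg(p\,n \lor \neg\, p\,n) \) holds unconditionally. I then split on \( p\,n \). If \( p\,n \) holds, I apply the Extension condition (2) to \( n \) and \( c \), obtaining \( c' \) with \( \forall x.\, x \in c' \iff x = n \lor x \in c \), and verify the invariant at \( S\,n \) using \( u < S\,n \iff (u < n \lor u = n) \) together with \( p\,n \) to absorb the \( u = n \) case. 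If \( \neg\, p\,n \) holds, I simply reuse \( c' \cdef c \): this is exactly where the strengthened hypothesis pays off, since it forces \( n \notin c \) (because \( n < n \) is false), which matches \( \neg\, p\,n \).

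The main obstacle, and the precise point where a naive transcription of \autoref{coding_nat} breaks down, is the \( \neg\, p\,n \) branch: the abstract axioms provide no means of deleting \( n \) from a code that might already contain it. The resolution is the sharper invariant, which is available precisely because the biconditional in the Extension axiom fully specifies the membership of the extended code and because the construction begins from a genuinely empty set rather than from an arbitrary unit. Once the invariant is in place, the remaining work consists only of the routine case distinctions on \( u < S\,n \) already present in \autoref{coding_nat}, now checked against the tighter characterization of membership; I expect no further difficulty there.
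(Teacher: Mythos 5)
Your proof is correct and follows essentially the same route as the paper: strengthen the statement to pin down membership of the code exactly, induct on \( n \), use the empty set in the base case, and in the step combine the case distinction on \( \neg\neg(p\,n \lor \neg p\,n) \) with the Extension condition. Your invariant \( u \in c \iff (u < n \land p\,u) \) is logically equivalent to the conjunction used in the paper (and already present in \autoref{bounded_coding_nat}, whose second conjunct \( \primeFunc{u}\mid c \to u < n \) plays exactly the role you describe), so the ``sharper invariant'' is a rephrasing rather than a genuine departure.
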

\begin{proof}
  Following what we did for \autoref{coding_nat}, it is easier to prove the strengthened statement
  \[
    \neg \neg \, \exists c \col \Nat{} ~ \forall u \col \Nat.
    \big(u < n \rightarrow (p \, u \leftrightarrow u \in c) \big) \land \big(u \in c \rightarrow u < n \big)
  \]
  by induction on \(n\).
  The proof then follows the proof of \autoref{coding_nat}. In the base case we make use of the empty set, and in the induction step we do a case distinction on \( \neg \neg (p \, n \lor \neg p \, n) \) and utilize the extension property.
  \coqlink[coding_bounded]{The details for these steps were checked with  Coq}.
\end{proof}
If we have a binary formula \( \varphi_\in(x,y) \) satisfying the same conditions inside of \HA{}, we can give a derivation of \( \HA \vdash \forall n \, \neg \neg \, \exists c \, \forall u < n. \, \alpha(u) \iff \varphi_\in(u, c) \) and verbatim run Makholm's proof.
Using \( \num{\, \cdot \,} \in d \) as a shorthand for \( \lambda n. \, \M \vDash \varphi_\in(\num n, d) \) we would then get:
\begin{quote}
  \hspace{-1.4cm}
  \textbf{Theorem.}
  \textit{If \( \M > \Nat \) then \( \neg \neg \exists d \col \M. \, \neg \Dec{(\num{\, \cdot \,} \in d)} \).}
\end{quote}
This highlights that the statement of \autoref{Makholm} is not inherently tied to divisibility, which is also remarked by Kaye in~\cite{kaye2011tennenbaum}.
It rather seems generally tied to relations that allow the implementation of finite sets or sequences inside of \HA{}.
Visser~\cite{visser2008pairs} analyzes several axiomatizations of pairs, sets and sequences in first-order theories, and the conditions we have listed above appear as the axioms of the weak set theory \WS{}, which can interpret \Q and is therefore essentially undecidable.
This latter point raises an interesting question:
One can now wonder if it is decidability of \( \num{\, \cdot \,} \in d \) or \( \divNat{d} \) respectively, together with the essential undecidability of \WS{} and \Q{}, which combine to rule out that the model is non-standard.

\section{Discussion}\label{section_discussion}

\subsection{General Remarks}

In Section~\ref{section_Tennenbaum}, we presented several proofs of Tennenbaum's theorem which we summarize in the table below, listing their assumptions\footnote{
  We do not list the global assumption \CTQ{}\@.
  \HA-coding (\autoref{hypothesis_object_level_coding}) was not mechanized in Coq but is provable, which is why we leave it out of the table.
}
on the left and the conclusion on the right.
\begin{figure}[h!]
  \begin{tabular*}{0.97\textwidth}{c|c|c||rc|l}
    \hline
    \hyperref[classical_principles]{\textcolor{cyan}{\MP}} & 
    \hyperref[Unique_Choice]{\textcolor{cyan}{\( \UC \)}} & 
    \hyperref[type_properties]{\textcolor{cyan}{\M discrete}} & 
    & \hspace{-8em} Theorem 
    & Idea/Technique 
    \\ \hline
    \( \bullet \) & 
    & 
    \( \bullet \) &  
    \M enumerable & 
    \hspace{-3.8em}\( \to \) \hspace{2em} \( \M \cong \Nat \) & 
    \hyperref[Tennenbaum_diagonal]{Diagonalization}
    \\ \hline
    \( \bullet \) & 
    & 
    \( \bullet \) & 
    \( \M > \Nat \) &
    \( \to \neg \neg \exists d \col \M. \neg \Dec{(\divNat d)} \) & 
    \hyperref[tennenbaum-inseparable]{Inseparability}
    \\ \hline
    & 
    & 
    & 
    \( \M > \Nat \) &
    \( \to \neg \neg \exists d \col \M. \neg \Dec{(\divNat d)} \) & 
    \hyperref[Makholm]{Makholm}
    \\ \hline
    \( \bullet \) & 
    \( \bullet \) & 
    & 
    &
    \( \M \cong \Nat \) & 
    \hyperref[McCarty]{McCarty}
    \\ \hline
  \end{tabular*}
\end{figure}

\noindent
First note that in the first row we can clearly show the reverse implication (\( \M \cong \Nat \) implies enumerability).
If we additionally assume \MP{} and discreteness of \M, the first two results combined give the equivalences
\begin{align*}
  \M \text{ enumerable}
  ~\Iff~
  \M \cong \Nat
  \stackrel{\ref{MP_std_stable}}{~\Iff~}
  \neg \, \M > \Nat
  ~\Iff~
  \neg \exists d. \neg \Dec{(\divNat d)}.
\end{align*}

Furthermore, we can see that Makholm's result is a strengthening of the second one, in the sense that it no longer requires \MP{} nor the discreteness assumption.
This becomes possible by the usage of \HA{}-inseparable formulas, overcoming the need for Overspill, which---in the proof of \autoref{tennenbaum-inseparable}---can be identified to be the root for these additional assumptions.
In general, we can observe that the results become progressively stronger and less reliant on further assumptions as more and more intermediary results are proven on the object level of \HA{}. 
For example, instead of using Overspill to establish infinite coding, we later use that the coding lemma holds internally (\autoref{hypothesis_object_level_coding}).
Likewise, using \HA-inseparable formulas, instead of inseparable formulas, contributed to another strengthening. 
This might not be the end of the strengthenings that can be done, as it is remains open whether the constructively stronger \( \M > \Nat \to \exists d. \, \neg \Dec{(\divNat d)} \) can also be shown.

As was pointed out by McCarty in~\cite{McCarty}, a weaker version \WCT{} of \CT{} suffices for his proof, where the code representing a given function is hidden behind a double negation.
He mentions in~\cite{mccarty1991incompleteness} that \WCT{} is still consistent with the Fan theorem, while \CT{} is not.
Analogously, the following weakening of \CTQ{} suffices for all of the proofs that we have presented:
\setCoqFilename{FOL.Tennenbaum.Church}
\begin{definition}[\WCTQ][WCT_Q]
  For every function \( f : \Nat \to \Nat \) there \emph{potentially} is a binary \( \Sigma_1 \)--formula \( \varphi_f(x,y) \) such that for every \( n \col \Nat \) we have \( \Q \vdash \forall y. \, \varphi_f(\num n, y) \iff \num{f n} = y \).
\end{definition}
This only needs few changes of the presented proofs.\footnote{
We could have presented all results with respect to \WCTQ{}. We opted against this in favor of \CTQ{}, to avoid additional handling of double negations and to keep the proofs more readable.
}
An advantage of \WCTQ{} over \CTQ{} is that the former follows from the double negation of the latter and is therefore negative, ensuring that its assumption does not block the reduction of proof-terms found in the mechanization~\cite{coquand2013negative}.

Depending on the fragment of first-order logic, one can give constructive proofs of the model existence theorem~\cite{forster2021completeness}, producing a countable syntactic model with computable functions for every consistent theory.
By the argument given in the introduction, model existence would yield a countable and computable non-standard model of \PA, which at first glance seems to contradict the statement of Tennenbaum's theorem.
For any countable non-standard model of \PA{} however, \autoref{Makholm} and \autoref{decidable_div} entail that neither equality nor apartness can be decidable.
This is similar in spirit to the results in~\cite{tomasz2017computable}, showing that even if the functions of the model are computable, non-computable behavior still emerges, but in relation to equality.

\subsection{Coq Mechanization}

The Coq development is axiom-free and the usage of crucial but constructively justified assumptions \CTQ{}, \MP{} and \UC{} is localized in the relevant sections.
Apart from these, there is \autoref{hypothesis_object_level_coding} which is taken as an additional assumption in the relevant sections.
We have given details as to how this hypothesis can be proven, but since we did not yet mechanize the proof, we wanted to make its assumption on the level of the mechanization very explicit, by labeling it as a hypothesis in the accompanying text.

The development depends on and contributes to a growing collaborative Coq library for first-order logic~\cite{kirst2022library}.
Restricting to the files for this project, the line count is roughly 3600 lines of code.
From those, 1500 lines on basic results about \Nat{} and \PA{} models were reused from earlier work~\cite{kirst2021synthetic}.
Notably, the formalization of the various coding lemmas from Section~\ref{section_coding} took 470 lines and all variants of Tennenbaum's theorem together amount to a total of only 800 lines.

\subsection{Related Work}

Classical proofs of Tennenbaum's theorem can be found in~\cite{boolos2002computability, smith2014tennenbaum, kaye2011tennenbaum}.
There are also refinements of the theorem which show that computability of either operation suffices~\cite{mcaloon1982complexity} or which reduce the argument to a weaker induction scheme~\cite{wilmers1985bounded, cegielski1982modeles}.
Constructive accounts were given by McCarty~\cite{mccarty1987variations, McCarty}
and Plisko~\cite{plisko1990constructive}, and a relatively recent investigation into Tennenbaum phenomena was conducted by Godziszewski and Hamkins~\cite{tomasz2017computable}. 

For an account of \CT{} as an axiom in constructive mathematics we refer to Kreisel~\cite{kreisel1970church} and Troelstra~\cite{troelstra1973metamathematical}.
Investigations into \CT{} and its connections to other axioms of synthetic computability based on constructive type theory were done by Forster~\cite{forster:LIPIcs:2021:13455, forster2022church}.
While there is no proof for the consistency of \CT{} in \CIC{}, there are consistency proofs for very similar systems~\cite{yamada2020game, swan2019churchs, forster2022church}.

Compared to the previous conference paper~\cite{hermes2022tennenbaum}, this extended journal version relies on the slightly different definition of \( \Sigma_1 \) and \( \Delta_1 \)--formulas used by Kirst and Peters~\cite{kirst2023goedel}. 
Moreover, they give a derivation of our formulation of \CTQ{} from a more conventional formulation of Church's thesis, illustrating that \CTQ{} is a convenient axiom for sidestepping much of the first-order encoding overhead, while the work needed to formally capture computation within \Q is feasible in its mechanization.

Presentations of first-order logic in the context of proof-checking have already been discussed and used, among others, by Shankar~\cite{shankar1986proof}, Paulson~\cite{paulson2015mechanized}, O'Connor~\cite{oconnor2005essential}, as well as Han and van Doorn~\cite{han2020formal}.
We make use of the Coq library for first-order logic~\cite{kirst2022library,hostert2021toolbox}, which has evolved from several previous projects~\cite{forster2019synthetic,forster2021completeness,kirst2020trakhtenbrot,kirst2021synthetic} and depends on the Coq library of undecidability proofs~\cite{forster2020coq}.

Synthetic computability theory was introduced by Richman and Bauer~\cite{richman1983church,bauer2006first} and initially applied to constructive type theory by Forster, Kirst, and Smolka~\cite{forster2019synthetic}.
Their synthetic approach to undecidability results has been used in several other projects, all merged into the Coq library of undecidability proofs~\cite{forster2020coq}.

\subsection{Future Work}

By relying on the synthetic approach, our treatment of Tennenbaum's theorem does not explicitly mention the computability of addition or multiplication of the model.
To make these assumptions explicit again, and to also free our development from the necessity of adapting this viewpoint, we could assume an abstract version of \CT{} which makes reference to a \( T \) predicate~\cite{kleene1943recursive, forster:LIPIcs:2021:13455} and is then used to axiomatize \( T \)-computable functions.
Combining this with a version of \CT{} that stipulates the computability of every \( T \)-computable function, would then allow us to specifically assume \( T \)-computability for either addition or multiplication, and to formalize the result that \( T \)-computability of either operation leads to the model being standard~\cite{mcaloon1982complexity}.

In this work, we have mechanized one of the two hypotheses that remained unmechanized in~\cite{hermes2022tennenbaum}, leaving solely the object level coding lemma (\autoref{hypothesis_object_level_coding}) yet to be mechanized. 
Achieving this will necessitate the transformation of the proof provided for \autoref{coding_nat} into a derivation using the axioms of \HA{}. 
This task will require the establishment of standard results about prime numbers, akin to what we established on the level of Coq's logic in one of the files.
Similar to the mechanization of \autoref{lemma_HA_inseparable}, these proofs will significantly benefit from the proof mode developed in~\cite{hostert2021toolbox}.
However, even with the proof mode, the translation of these Coq proofs into derivations entails a substantial amount of engineering work. 
Our preliminary estimates suggest that this will be a relatively independent project on its own, hence we leave it for future exploration.

A more satisfying rendering of McCarty's result would be achieved by changing the semantics (\autoref{def_Tarski_semantics}), and putting the interpretations of formulas on the (proof-relevant) type level instead of the propositional level, therefore removing the need to assume \( \UC \) to break the barrier from the propositional to the type level.

Following usual practice in textbooks, we consider the first-order equality symbol as a syntactic primitive and only regard models interpreting it as actual equality in Coq.
When treated as axiomatized relation instead, we could consider the (slightly harder to work with) setoid models and obtain the more general result that no computable non-standard setoid model exists.

There are interesting parallels when comparing the proofs of Tennenbaum's theorem and proofs of the first incompleteness theorem.
In particular, we saw that the usage of \HA{}-inseparable sets, and therefore the usage of Rosser's trick, leads to an improvement of the constructive Tennenbaum result.
Connections between the two theorems are well-known~\cite{kaye1991models,kaye2011tennenbaum}, but it should be interesting to combine the presented work with work like~\cite{kirst2023goedel}, to study their connection in a constructive framework.
We hope that this work has illustrated the value of such projects, as they can shed new light on old proofs, bringing forward their constructive content.

\section*{Acknowledgments}
\noindent The authors want to thank Yannick Forster, Johannes Hostert, Benjamin Peters, and the anonymous reviewers for their helpful comments and suggestions.

\bibliographystyle{alphaurl}
\bibliography{references}

\appendix
\section{Deduction Systems}\label{appendix_natural_deduction}

Intuitionistic natural deduction \( \vdash ~: \List{\Form} \rightarrow \Form \rightarrow \Prop \) is defined inductively by the rules
\begin{center}
  \(\begin{gathered}
    \infer{\Gamma\vdash \varphi}{\varphi\in \Gamma}\hspace{3em}
    \infer{\Gamma\vdash \varphi}{\Gamma\vdash \bot}\hspace{3em}
    \infer{\Gamma\vdash \varphi\simp\psi}{\Gamma,\varphi\vdash \psi}\hspace{3em}
    \infer{\Gamma\vdash \psi}{\Gamma\vdash \varphi\simp\psi&\Gamma\vdash\varphi}
    \\[0.3cm]
    \infer{\Gamma\vdash \varphi\sand\psi}{\Gamma\vdash \varphi& \Gamma\vdash \psi}\hspace{3em}
    \infer{\Gamma\vdash \varphi}{\Gamma\vdash \varphi\sand\psi}\hspace{3em}
    \infer{\Gamma\vdash \psi}{\Gamma\vdash \varphi\sand\psi}
    \\[0.3cm]
    \infer{\Gamma\vdash \varphi\sor\psi}{\Gamma\vdash \varphi}\hspace{3em}
    \infer{\Gamma\vdash \varphi\sor\psi}{\Gamma\vdash \psi}\hspace{3em}
    \infer{\Gamma\vdash\theta}{\Gamma\vdash\varphi\sor\psi&\Gamma,\varphi\vdash \theta&\Gamma,\psi\vdash \theta}
    \\[0.3cm]
    \infer{\Gamma\vdash\sforall \varphi}{ \Gamma[\uparrow]\vdash \varphi}\hspace{3em}
    \infer{\vphantom{\forall}\Gamma\vdash \varphi[t]}{\Gamma\vdash  \sforall \varphi}\hspace{3em}
    \infer{\Gamma\vdash \sexists\varphi}{\Gamma\vdash \varphi[t]}\hspace{3em}
    \infer{\vphantom{\sforall}\Gamma\vdash \psi}{\Gamma\vdash\sexists\varphi& \Gamma[\uparrow],\varphi\vdash \psi[\uparrow]}
    \end{gathered}\)
\end{center}
where \( \varphi[t] \) for a term \( t \) is a short for \( \varphi[\, t ; \lambda n. \, x_n ] \).
We get the classical variant \( \cvdash \) by adding Peirce's rule as an axiom:
\begin{center}
  \( \infer{\Gamma \vdash_c ((\varphi \to \psi) \to \varphi) \to \varphi }{} \)
\end{center}
The deduction systems lift to possibly infinite contexts \( \MCL T:\Form\to\Prop \) by writing \( \MCL T\vdash \varphi \) if there is a finite \( \Gamma\subseteq \MCL T \) with \( \Gamma\vdash\varphi \).

\end{document}